\newcommand{\varep}{\varepsilon}
\newtheorem{thm}{Theorem}[section]
\newtheorem{lemma}[thm]{Lemma}
\newtheorem{remark}[thm]{Remark}
\numberwithin{equation}{section}
\begin{document}

\bibliographystyle{amsplain}

\title{Commutator Estimates  for \\ the Dirichlet-to-Neumann Map  in Lipschitz Domains
}

\author{Zhongwei Shen\thanks{Supported in part by NSF grant DMS-1161154}}

\date{ }

\maketitle

\begin{abstract}

We establish two commutator estimates for the Dirichlet-to-Neumann map associated with  a second-order
elliptic system in divergence form in Lipschitz domains.
Our approach is based on Dahlberg's bilinear estimates.

\medskip

{\it MSC:}\ 35J57

\end{abstract}

%%%%%%%%%%%%%%%%%%%%%%%%%%%%%%%%%%%%%%%%%%%%%%%%%%%%%%

\section{Introduction}

This paper concerns the $L^2$ boundedness of the commutator
\begin{equation}\label{commutator}
[\Lambda, g ]f=\Lambda (gf) -g \Lambda (f),
\end{equation}
where $\Lambda$ is the Dirichlet-to-Neumann map associated with the
elliptic operator
\begin{equation}\label{L}
\mathcal{L}=-\text{div} \big( A(x)\nabla \big)
=-\frac{\partial}{\partial x_i} \left\{ a_{ij}^{\alpha\beta} (x)
\frac{\partial}{\partial x_j} \right\} \quad \text{ in } \Omega,
\end{equation}
and $\Omega$ is  a bounded Lipschitz domain in $\mathbb{R}^d$.
Assume that $A=A(x) = \big( a_{ij}^{\alpha\beta} (x) \big)$, with  $1\le i, j\le d$ and $1\le \alpha, \beta\le m$, is real,
bounded measurable, and
satisfies the ellipticity condition:
\begin{equation}\label{ellipticity}
\mu |\xi|^2\le a_{ij}^{\alpha\beta} (x)\xi_i^\alpha\xi_j^\beta
 \le \frac{1}{\mu} |\xi|^2, \quad\text{ for any } \xi=(\xi_i^\alpha)\in \mathbb{R}^{d\times m}
 \text{ and } x\in \mathbb{R}^d,
\end{equation}
where $\mu>0$. It is  well known that for any $f=(f^\alpha)\in H^{1/2}(\partial\Omega; \mathbb{R}^m)$, 
 the Dirichlet problem
\begin{equation}\label{Dirichlet}
\mathcal{L}(u) =0 \quad \text{ in } \Omega \quad \text{ and } \quad 
u= f \quad \text{ on } \partial\Omega
\end{equation}
has a unique solution in $H^1(\Omega; \mathbb{R}^m)$.
The Dirichlet-to-Neumann map $\Lambda$ is defined by
\begin{equation}\label{D-to-N}
\big(\Lambda (f) \big)^\alpha =\left(\frac{\partial u}{\partial \nu}\right)^\alpha
=n_i a_{ij}^{\alpha\beta}  \frac{\partial u^\beta}{\partial x_j}.
\end{equation}
This gives a bounded linear map $\Lambda : H^{1/2} (\partial\Omega; \mathbb{R}^m)
\to H^{-1/2} (\partial\Omega;\mathbb{R}^m)$.
Under the additional symmetry condition $A^*=A$, i.e., $a_{ij}^{\alpha\beta}
=a_{ji}^{\beta\alpha}$, as well as some smoothness condition on $A$,
one may show that $\Lambda : H^1(\partial\Omega; \mathbb{R}^m) \to L^2(\partial\Omega; \mathbb{R}^m)$
is bounded (see \cite {Dahlberg-1979, Jerison-Kenig-1980, Verchota-1984, Dahlberg-Kenig-1987, 
Mitrea-Taylor-2000, Kenig-Shen-2}
and their references). The following are our main results of the paper.

\begin{thm}\label{main-theorem-1}
Let $\Omega$ be a bounded Lipschitz domain in $\mathbb{R}^d$.
Assume that $A$ satisfies the ellipticity condition (\ref{ellipticity}) and
the symmetry condition $A^*=A$. Also assume that $A$ is H\"older continuous.
Then, for any $\mathbb{R}^m$-valued Lipschitz function $f$ and
any scalar Lipschitz function $g$ on $\partial\Omega$,
\begin{equation}\label{1.1}
\|\Lambda (gf)-g\Lambda (f) \|_{L^2 (\partial\Omega)}
\le C \, \| g\|_{C^{0,1}(\partial\Omega)} \| f\|_{L^2(\partial\Omega)},
\end{equation}
where $C$ depends only on $A$ and $\Omega$.
\end{thm}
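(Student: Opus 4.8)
The plan is to combine a simple duality argument with Dahlberg's bilinear estimate, the symmetry $A^{*}=A$ being used to turn the commutator into an interior bilinear form in two solutions. First extend $g$ to a Lipschitz function on $\overline{\Omega}$, still denoted $g$, with $\|g\|_{C^{0,1}(\overline{\Omega})}\le C\|g\|_{C^{0,1}(\partial\Omega)}$. Let $u,w\in H^{1}(\Omega;\mathbb{R}^{m})$ be the solutions of $\mathcal{L}(u)=0$, $u=f$ on $\partial\Omega$ and $\mathcal{L}(w)=0$, $w=gf$ on $\partial\Omega$; since $f$ and $g$ are Lipschitz these are genuine $H^{1}$ functions, and the product rule for traces gives $(gu)|_{\partial\Omega}=gf$, so $w-gu\in H^{1}_{0}(\Omega;\mathbb{R}^{m})$. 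The quantity $\Lambda(gf)-g\Lambda(f)=\partial w/\partial\nu-g\,\partial u/\partial\nu$ lies a priori only in $H^{-1/2}(\partial\Omega;\mathbb{R}^{m})$, so I will estimate its pairing against an arbitrary Lipschitz function $\psi$ on $\partial\Omega$ and then use that Lipschitz functions are dense in $L^{2}(\partial\Omega)$.

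Given such $\psi$, let $W\in H^{1}(\Omega;\mathbb{R}^{m})$ solve $\mathcal{L}(W)=0$, $W=\psi$ on $\partial\Omega$; by $A^{*}=A$, $W$ also solves the adjoint system. Using the weak characterization of the conormal derivative of a solution $v$ — namely $\langle\partial v/\partial\nu,\phi\rangle=\int_{\Omega}A\nabla v\cdot\nabla\Phi$ for any $H^{1}$ extension $\Phi$ of $\phi$, independently of the extension — one gets $\langle\Lambda(gf),\psi\rangle=\int_{\Omega}A\nabla w\cdot\nabla W$ and $\langle g\Lambda(f),\psi\rangle=\langle\Lambda(f),g\psi\rangle=\int_{\Omega}A\nabla u\cdot\nabla(gW)$, the last using the admissible extension $gW$ of $g\psi$. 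Since $w-gu\in H^{1}_{0}$ and $\int_{\Omega}A\nabla W\cdot\nabla\zeta=0$ for all $\zeta\in H^{1}_{0}$ (here $A^{*}=A$ is used), the first integral equals $\int_{\Omega}A\nabla(gu)\cdot\nabla W$; expanding $\nabla(gu)$ and $\nabla(gW)$ by the product rule, the two occurrences of $g\,A\nabla u\cdot\nabla W$ cancel, and one is left with the purely interior identity
\[
\langle\Lambda(gf)-g\Lambda(f),\psi\rangle=\int_{\Omega}\Big[A\big(u\,\nabla g\big)\cdot\nabla W-A\nabla u\cdot\big(W\,\nabla g\big)\Big]\,dx .
\]
In particular $|\langle\Lambda(gf)-g\Lambda(f),\psi\rangle|\le C\|g\|_{C^{0,1}}\int_{\Omega}\big(|u|\,|\nabla W|+|\nabla u|\,|W|\big)\,dx$.

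It remains to estimate $\int_{\Omega}|u|\,|\nabla W|\,dx$ and $\int_{\Omega}|\nabla u|\,|W|\,dx$; this is precisely where Dahlberg's bilinear estimate enters, $u$ and $W$ both being solutions of $\mathcal{L}$ (equivalently of $\mathcal{L}^{*}$). Because $A$ is symmetric and H\"older continuous, the $L^{2}$ Dirichlet problem for $\mathcal{L}$ in the Lipschitz domain $\Omega$ is solvable, so that $\|(u)^{*}\|_{L^{2}(\partial\Omega)}\le C\|f\|_{L^{2}(\partial\Omega)}$ and $\|(W)^{*}\|_{L^{2}(\partial\Omega)}\le C\|\psi\|_{L^{2}(\partial\Omega)}$, where $(\cdot)^{*}$ denotes the nontangential maximal function; Dahlberg's bilinear estimate then yields
\[
\int_{\Omega}|u|\,|\nabla W|\,dx+\int_{\Omega}|\nabla u|\,|W|\,dx\le C\,\|(u)^{*}\|_{L^{2}(\partial\Omega)}\,\|(W)^{*}\|_{L^{2}(\partial\Omega)}
\]
(the two integrals being interchanged under $u\leftrightarrow W$, so that one estimate handles both). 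Combining the last three displays gives $|\langle\Lambda(gf)-g\Lambda(f),\psi\rangle|\le C\|g\|_{C^{0,1}}\|f\|_{L^{2}(\partial\Omega)}\|\psi\|_{L^{2}(\partial\Omega)}$ for every Lipschitz $\psi$, whence, by density and the Riesz representation theorem, $\Lambda(gf)-g\Lambda(f)\in L^{2}(\partial\Omega)$ and \eqref{1.1} holds.

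The two deep ingredients are Dahlberg's bilinear estimate and the $L^{2}$-solvability of the Dirichlet problem for symmetric H\"older systems in Lipschitz domains, both of which I treat as available. The step I expect to require the most care — and the heart of the argument — is the algebraic reduction in the second paragraph: it is the exact cancellation of the $g\,A\nabla u\cdot\nabla W$ terms, hinging on $A^{*}=A$ and on the freedom to use $gW$ as the extension of $g\psi$, that converts the commutator, which a priori is only an $H^{-1/2}$-valued object accessible through boundary pairings, into a clean interior bilinear form in the two solutions $u$ and $W$ with no residual boundary integral, thereby making Dahlberg's estimate directly applicable. Secondary points to verify are the product rules for traces and gradients of products of $H^{1}$ functions with Lipschitz functions, the extension-independence of $\langle\partial v/\partial\nu,\cdot\rangle$, and the boundedness of multiplication by $g$ on $H^{\pm 1/2}(\partial\Omega)$.
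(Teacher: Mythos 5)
Your derivation of the interior identity is correct and is in fact the same starting point as the paper (its identity (\ref{4.0-4}), with your $W$ playing the role of $h$ and your Lipschitz extension of $g$ playing the role of $v$). The gap is in the very next step, which is where all the real work lies: you replace the signed trilinear forms by $\int_\Omega\big(|u|\,|\nabla W|+|\nabla u|\,|W|\big)\,dx$ and assert that Dahlberg's bilinear estimate bounds this by $C\|(u)^*\|_{L^2(\partial\Omega)}\|(W)^*\|_{L^2(\partial\Omega)}$. Dahlberg's estimate controls only the \emph{signed} form $\int_\Omega \nabla u\cdot v\,dx$, and its right-hand side is not a product of nontangential maximal functions but
$\big(\int_\Omega|\nabla u|^2\delta\big)^{1/2}\big(\int_\Omega|\nabla v|^2\delta+\int_{\partial\Omega}|(v)^*|^2\big)^{1/2}$; the cancellation exploited by the integration by parts in its proof is destroyed once absolute values are put inside the integral. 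Indeed the inequality you claim is false already for the Laplacian: in the unit disc take $W\equiv 1$ (boundary data $\psi\equiv1$) and $u=\sum_k k^{-1}r^{2^k}\cos(2^k\theta)$, a lacunary harmonic series with $L^2$ boundary data; then $\int_\Omega|\nabla u|\,|W|\,dx\simeq\sum_k k^{-1}=\infty$, while the right-hand side of your inequality is finite. So the central estimate of your outline does not follow from the quoted tool and cannot hold as stated.

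To make the argument work you must keep the product structure, as the paper does: write, e.g., the first term as $\int_\Omega\nabla W\cdot F\,dx$ with $F=\big(u^\alpha a_{ji}^{\beta\alpha}\partial_i v\big)$, and then Dahlberg's estimate demands control of $\int_\Omega|\nabla F|^2\,\delta(x)\,dx$ and $\|(F)^*\|_{L^2(\partial\Omega)}$. Differentiating $F$ produces three terms your outline never addresses: (i) $\nabla^2 v$, which forces one to use not an arbitrary Lipschitz extension of $g$ but one with $\|\nabla v\|_\infty\le C\|g\|_{C^{0,1}}$ and $|\nabla^2 v|^2\delta\,dx$ a Carleson measure (the paper's Lemma \ref{lemma-4.0}); (ii) $\nabla u$, handled by the square function estimate (\ref{max-square}); and (iii) $\nabla A$, which for a merely H\"older continuous $A$ need not exist at all, so the Carleson-measure form of the bilinear estimate (Theorem \ref{theorem-2.1}) is not directly applicable; the paper circumvents this by replacing $A$ by the harmonic extension $B$ of its boundary values, using $|\nabla B|\le C\delta^{\eta-1}$, $|A-B|\le C\delta^{\eta}$, and the weighted perturbation estimates of Section 3 (Theorem \ref{theorem-3.1}) to absorb the error. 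Until these ingredients are supplied, the passage from your identity to the estimate (\ref{1.1}) is missing its essential content.
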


\begin{thm}\label{main-theorem-2}
Assume that $\Omega$ and $A$ satisfy the same conditions as in Theorem \ref{main-theorem-1}.
Then, for any $\mathbb{R}^m$-valued Lipschitz function $f$ and
any scalar Lipschitz function  $g$ on $\partial\Omega$,
\begin{equation}\label{1.2}
\|\Lambda (gf)- g \Lambda (f) \|_{L^2(\partial\Omega)}
\le C\,  \| u\|_{L^\infty (\Omega)} \| g\|_{H^1(\partial\Omega)},
\end{equation}
where $u$ is the solution of (\ref{Dirichlet}) with boundary data $f$.
The constant $C$ in (\ref{1.2}) depends only on $A$ and $\Omega$.
\end{thm}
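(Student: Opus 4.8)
The plan is to follow the scheme of Theorem~\ref{main-theorem-1}: by $L^2$-duality, reduce the commutator bound to a bilinear integral over $\Omega$, and estimate that integral by means of Dahlberg's bilinear estimates. The new feature is that $g$ is now controlled only in $H^1(\partial\Omega)$, so its Lipschitz extension (whose gradient is bounded by $\|g\|_{C^{0,1}}$, as used for Theorem~\ref{main-theorem-1}) is unavailable; one must arrange matters so that only $\nabla G$, never $G$ itself, appears, and bring in the factor $\|u\|_{L^\infty(\Omega)}$ through the Carleson-measure estimate for bounded solutions.

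\emph{Step 1: reduction and the key identity.} After a routine approximation we may take $f,h$ Lipschitz on $\partial\Omega$, and it suffices to bound $|\langle \Lambda(gf)-g\Lambda f,\,h\rangle|$ by $C\,\|u\|_{L^\infty(\Omega)}\,\|g\|_{H^1(\partial\Omega)}\,\|h\|_{L^2(\partial\Omega)}$, with $u=u_f$ and $h$ ranging over Lipschitz functions. Since $[\Lambda,g]=[\Lambda,g-c]$ we may also assume $\int_{\partial\Omega}g=0$, so $\|g\|_{H^1(\partial\Omega)}\simeq\|\nabla_{\mathrm{tan}}g\|_{L^2(\partial\Omega)}$. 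Let $v=u_h$ and let $G\in H^1(\Omega)$ be an extension of $g$. Because $A^*=A$, the operator $\Lambda$ is self-adjoint; pairing $\Lambda(gf)$ with $h$ and $\Lambda f$ with $gh$, and using $Gu$, $Gv$ as admissible extensions of $gf$, $gh$ in Green's formula, the symmetry $a_{ij}^{\alpha\beta}=a_{ji}^{\beta\alpha}$ annihilates the ``diagonal'' term $\int_\Omega G\bigl(A\nabla u\cdot\nabla v-A\nabla v\cdot\nabla u\bigr)\,dx$ and leaves
$$
\langle \Lambda(gf)-g\Lambda f,\,h\rangle \;=\; \int_\Omega a_{ij}^{\alpha\beta}\,\partial_i G\,\bigl(u^\alpha\,\partial_j v^\beta-v^\alpha\,\partial_j u^\beta\bigr)\,dx .
$$

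\emph{Step 2: the analytic ingredients.} Fix $G$ to be the harmonic extension of $g$ into $\Omega$, and write $\delta(x)=\mathrm{dist}(x,\partial\Omega)$ and $N(\cdot)$ for the nontangential maximal function. Under the present hypotheses on $\mathcal L$ one has $L^2$-solvability of the Dirichlet problem together with the square-function estimate, so $\|N(v)\|_{L^2(\partial\Omega)}\le C\|h\|_{L^2(\partial\Omega)}$ and $\int_\Omega|\nabla v|^2\delta\,dx\le C\|h\|_{L^2(\partial\Omega)}^2$; $L^2$-solvability of the regularity problem for the Laplacian in Lipschitz domains gives $\|N(\nabla G)\|_{L^2(\partial\Omega)}\le C\|\nabla_{\mathrm{tan}}g\|_{L^2(\partial\Omega)}$ and $\int_\Omega|\nabla G|^2\delta\,dx\le C\|\nabla_{\mathrm{tan}}g\|_{L^2(\partial\Omega)}^2$; and, since $u$ is a bounded solution of $\mathcal L u=0$, the measure $|\nabla u(x)|^2\delta(x)\,dx$ is a Carleson measure on $\Omega$ with norm $\le C\|u\|_{L^\infty(\Omega)}^2$. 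These are the replacements for the Lipschitz bound on $\nabla G$.

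\emph{Step 3: estimating the bilinear form.} Set $V_i:=a_{ij}^{\alpha\beta}(u^\alpha\partial_j v^\beta-v^\alpha\partial_j u^\beta)$, so the right side of the identity is $\int_\Omega\nabla G\cdot V\,dx$ and, crucially, $\mathrm{div}\,V=0$ in $\Omega$ (this uses only $A^*=A$ and $\mathcal L u=\mathcal L v=0$, so no derivatives of $A$ are taken). Exploiting this, one reorganizes the integral so that a tangential derivative falls on $g$ — e.g.\ by writing $V$ through a (two-form) potential $\omega$ whose contraction with the normal is tangential on $\partial\Omega$ — reducing matters to the estimate
$$
\Bigl|\int_\Omega\nabla G\cdot V\,dx\Bigr|\;\le\;\|\nabla_{\mathrm{tan}}g\|_{L^2(\partial\Omega)}\,\|T\|_{L^2(\partial\Omega)},
$$
where $T$ is the relevant boundary trace of $\omega$, and the task is to prove $\|T\|_{L^2(\partial\Omega)}\le C\|u\|_{L^\infty(\Omega)}\|h\|_{L^2(\partial\Omega)}$. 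In the two pieces $u^\alpha\,\partial_j v^\beta$ and $v^\alpha\,\partial_j u^\beta$ of $V$ one never differentiates $u$ past what is allowed: in the first piece one pulls out $\|u\|_{L^\infty(\Omega)}$ and controls what remains by Dahlberg's bilinear estimate together with the square-function bounds for $\nabla G$ and $\nabla v$; in the second piece one integrates by parts against the divergence-free flux $q^\alpha_i:=a_{ij}^{\alpha\beta}\partial_j u^\beta$ (again bypassing the non-smooth coefficients), producing boundary terms that recombine with the first piece and a bulk term $\int_\Omega|v|^2|\nabla u|^2\delta\,dx$-type quantity, which is handled by Carleson embedding, $\int_\Omega|v|^2\,d\mu\le C\|\mu\|_{\text{Carleson}}\|N(v)\|_{L^2(\partial\Omega)}^2$, applied to $d\mu=|\nabla u|^2\delta\,dx$. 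Combining these bounds and taking the supremum over $\|h\|_{L^2(\partial\Omega)}\le1$ gives \eqref{1.2}.

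\emph{The main obstacle.} The delicate point is that, unlike in Theorem~\ref{main-theorem-1}, there is no pointwise bound $|\nabla G|\le C\|g\|_{H^1}$, so the proof must genuinely use the divergence-free structure of $V$ to shift the derivative from $G$ onto $\partial\Omega$ (making $\nabla_{\mathrm{tan}}g\in L^2$ the object that appears) while simultaneously arranging that $\|u\|_{L^\infty(\Omega)}$ enters only through the Carleson-measure property of $|\nabla u|^2\delta\,dx$; making this potential-theoretic reorganization rigorous — in particular controlling the boundary behavior of the potential and keeping all integrations by parts away from the merely Hölder coefficients by routing them through divergence-free fluxes — together with the precise application of Dahlberg's bilinear estimates, is the technical heart of the argument.
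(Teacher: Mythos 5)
Your skeleton coincides with the paper's: the duality reduction, the identity you derive in Step 1 (which is exactly (\ref{4.0-4})), the choice of the harmonic extension of $g$ with the estimates (\ref{5.0-0}), and the Carleson property of $|\nabla u(x)|^2\delta(x)\,dx$ with norm $C\|u\|^2_{L^\infty(\Omega)}$ are precisely the ingredients of Section 5. The gap is in Step 3, where the actual estimation is replaced by assertions. The central one --- that the divergence-free field $V_i=a_{ij}^{\alpha\beta}\big(u^\alpha\partial_j v^\beta-v^\alpha\partial_j u^\beta\big)$ can be written as the divergence of an antisymmetric potential $\omega$ whose boundary trace $T$ satisfies $\|T\|_{L^2(\partial\Omega)}\le C\|u\|_{L^\infty(\Omega)}\|h\|_{L^2(\partial\Omega)}$ --- is given no construction and no proof, and it is essentially a restatement of the inequality being proved: granted it, the theorem follows in two lines, but none of your Step 2 ingredients produces such a trace bound ($V$ contains $u^\alpha\partial_j v^\beta$, and the nontangential maximal function of $\nabla v$ is \emph{not} controlled by $\|h\|_{L^2(\partial\Omega)}$; only square-function bounds are available, and the natural potential is nonlocal, so its $L^2(\partial\Omega)$ trace is exactly the hard object). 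Your treatment of the piece $v^\alpha\partial_j u^\beta$ by ``integrating by parts against the flux $q_i^\alpha=a_{ij}^{\alpha\beta}\partial_j u^\beta$'' is circular: since $\mathrm{div}\,q=0$, that integration by parts merely reproduces the boundary pairing $\int_{\partial\Omega} g\,\Lambda(f)\cdot h\,d\sigma$ from which the identity came, and the ``boundary terms that recombine'' and the ``bulk term'' are never exhibited.

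The second, independent gap is that you have no device for the places where a derivative must fall on the coefficients. Whenever Dahlberg's bilinear estimate (Theorem \ref{theorem-3.1}) is applied with a test field such as $W=\big(a_{ij}^{\alpha\beta}u^\alpha\partial_i G\big)$, its hypothesis requires $\int_\Omega|\nabla W|^2\delta(x)\,dx$, and $\nabla W$ contains $\nabla A$, which for merely H\"older $A$ is not controlled; the divergence-free structure of $V$ does not remove these non-divergence occurrences of $\nabla A$. The paper's solution, absent from your proposal, is the auxiliary matrix $B$ obtained by harmonically extending $A|_{\partial\Omega}$, with $|\nabla B|\le C[\delta(x)]^{\eta-1}$ and $|A-B|\le C[\delta(x)]^{\eta}$, so that $|\nabla B|^2\delta\,dx$ and $|A-B|^2[\delta]^{-1}dx$ are Carleson measures: all integrations by parts are performed with $B$ (splittings (\ref{5.0-5}) and (\ref{5.0-15})), the $A-B$ errors are absorbed by Cauchy--Schwarz against these Carleson measures, and in $J_1$ the integration by parts produces a term cancelling $I_1$, a harmless boundary term, and a term estimated by applying the bilinear estimate to the harmonic function $\nabla v$. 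Without $B$ (or an equivalent regularization) several of your intermediate quantities are not even well defined. A minor further correction: the square-function ingredient you need for the harmonic extension is $\int_\Omega|\nabla^2 G|^2\delta(x)\,dx+\int_{\partial\Omega}|(\nabla G)^*|^2\,d\sigma\le C\|g\|^2_{H^1(\partial\Omega)}$, i.e.\ second derivatives, not $\int_\Omega|\nabla G|^2\delta\,dx$ as written.
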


A few remarks are in order.

\begin{remark}\label{remark-1.1}
{\rm
If $\partial\Omega$ and $A$ are sufficiently smooth, the operator $\Lambda$ is a pseudo-differential operator
of order one. It follows that
$$
\|\Lambda (gf)-g \Lambda (f)\|_{L^p(\partial\Omega)} 
\le C \| g\|_{C^{0,1}(\partial\Omega)} \| f\|_{L^p(\partial\Omega)}
$$
for any $1<p<\infty$. This is a classical commutator estimate due to A.P. Calder\'on \cite{Calderon-1967}.
}
\end{remark}

\begin{remark}\label{remark-1.2}
{\rm 
If $m=1$, or $m>1$ and $d=2, 3$, the solutions of the Dirichlet problem (\ref{Dirichlet})
in Lipschitz domains
satisfy the Agmon-Miranda  maximum principle,
\begin{equation}\label{max}
\| u\|_{L^\infty(\Omega)} \le C\, \| f\|_{L^\infty(\partial\Omega)}
\end{equation}
(it is the usual maximum principle for $m=1$; see \cite{Dahlberg-Kenig-1990} for $m>1$ and $d=2,3$).
In this case the estimate (\ref{1.2}) may be replaced by
\begin{equation}\label{1.2-1}
\|\Lambda (gf)- g \Lambda (f) \|_{L^2(\partial\Omega)}
\le C\, \| f\|_{L^\infty (\partial\Omega)} \| g\|_{H^1(\partial\Omega)},
\end{equation}
which, by duality, is equivalent to
\begin{equation}\label{1.2-2}
\|\Lambda (gf)-g \Lambda (f)\|_{L^1(\partial\Omega)} \le C \, \| f\|_{L^2(\partial\Omega)}\| g\|_{H^1(\partial\Omega)}.
\end{equation}
We mention that  (\ref{max}) also holds in $C^1$ domains for $m>1$ and $d\ge 2$.
Whether the estimate (\ref{max}) holds in Lipschitz domains for $m>1$ and $d\ge 4$ remains a challenging open problem,
even for second-order elliptic systems with constant coefficients.
}
\end{remark}

\begin{remark}\label{remark-1.3}
{\rm 
Estimates in Theorems \ref{main-theorem-1} and \ref{main-theorem-2} 
were proved in \cite{KLS3} for the special case $\mathcal{L}=-\Delta$.
They were used to analyze the asymptotic behavior of the Dirichlet-to-Neumann maps
associated with elliptic systems with rapidly oscillating coefficients, arising in the theory of
homogenization.
We also point out that Theorems \ref{main-theorem-1} and \ref{main-theorem-2} continue to hold if
$g(x)=\big(g^{\alpha\beta}(x) \big)$ is an $m\times m$ matrix that commutes with $A(x)$; 
i.e. $g^{\alpha\beta} a_{ij}^{\beta\gamma}=a_{ij}^{\alpha\beta} g^{\beta\gamma}$.  
The proof is the same.
}
\end{remark}

Our approach to Theorems \ref{main-theorem-1} and \ref{main-theorem-2}, which is
similar to that used in \cite{KLS3} for $\mathcal{L}=-\Delta$,
 is based on Dahlberg's bilinear estimate
\begin{equation}\label{D-bilinear}
\aligned
& \left|\int_\Omega \nabla u \cdot v\, dx \right|\\
& \le C \left\{ \int_\Omega |\nabla u(x)|^2\, \delta(x)\, dx\right\}^{1/2}
\left\{ \int_\Omega |\nabla v (x)|^2\, \delta (x)\, dx +\int_{\partial\Omega} |(v)^*|^2\, d\sigma \right\}^{1/2},
\endaligned
\end{equation}
where $\delta (x)=\text{dist}(x, \partial\Omega)$,
$u=(u^\alpha) \in H^1(\Omega; \mathbb{R}^m)$ is a weak solution of
$\mathcal{L}(u)=0$ in $\Omega$, and
$v=(v_i^\alpha)\in H^1(\Omega; \mathbb{R}^{dm})$.
In (\ref{D-bilinear}) we have used $(v)^*$ to denote the nontangential maximal function 
of $v$.
The bilinear estimate was proved in \cite{Dahlberg-1986} for
harmonic functions $u$ in Lipschitz domains (see  related work in \cite{Dahlberg-1986-AJM, dkv-biharmonic}, where similar bilinear forms 
were used to solve $L^p$ Dirichlet problems in Lipschitz domains).
We also mention that the results in \cite{Dahlberg-1986} were extended in \cite{Hofmann-2008}
to a class of second-order elliptic operators in the upper half-space with time-independent complex coefficients.
In Section 2 we provide a relative simple proof
of (\ref{D-bilinear}), under the assumptions that $A$ satisfies (\ref{ellipticity}) and
$|\nabla A (x)|^2\, \delta (x)\, dx$ is a Carleson measure on $\Omega$.
The proof, which is probably known to experts in the area,
follows closely the basic argument in \cite{DKPV}, where
the equivalence in $L^p$ norms between the square function  and
the nontangential maximal function 
was established for solutions of higher-order elliptic systems
with constant coefficients in Lipschitz domains.
In Section 3 we use  a perturbation argument  to prove (\ref{D-bilinear})
for elliptic operators with H\"older continuous coefficients.

The proof of Theorem \ref{main-theorem-1} is given in Section 4, while
Section 5 is devoted to the proof of Theorem \ref{main-theorem-2}.
Let $h\in H^1(\Omega; \mathbb{R}^m)$ be a weak solution of
$\mathcal{L}^* (h)=-\text{div}(A^* (x)\nabla h)=0$ in $\Omega$.
The connection between the bilinear estimate (\ref{D-bilinear}) and the commutator 
$[\Lambda, g]$ is made through the following identity,
\begin{equation}\label{identity}
\aligned
& \int_{\partial\Omega} \big\{ \Lambda (gf)- g \Lambda (f)\big\} \cdot h\, d\sigma\\
&\qquad =\int_\Omega \frac{\partial v}{\partial x_i} \cdot u^\alpha \cdot a_{ji}^{\beta\alpha} \frac{\partial h^\beta}{\partial x_j}\, dx
-\int_\Omega \frac{\partial v}{\partial x_i} \cdot a_{ij}^{\alpha\beta} \frac{\partial u^\beta}{\partial x_j} \cdot h^\alpha\, dx,
\endaligned
\end{equation}
where $v\in H^1(\Omega)$ is any extension of $g$ to $\Omega$.
For the estimate (\ref{1.1}) we construct $v$ in such a way that
$\|\nabla v\|_{L^\infty(\Omega)}\le C \| g\|_{C^{0,1}(\partial\Omega)}$, and 
$d\nu =|\nabla^2 v (x)|^2\, \delta (x) \, dx$ is a Carleson measure on $\Omega$
with norm $\|\nu\|_{\mathcal{C}}$ less than $C \|g\|^2_{C^{0,1}(\partial\Omega)}$.
In the case of  (\ref{1.2}) we choose $v$ to be the harmonic extension of
$g$; i.e., $\Delta v=0$ in $\Omega$ and $v=g$ on $\partial\Omega$.

Finally, we remark that although we will not pursue the approach in this paper,
it seems possible, at least in the case of (\ref{1.1}),
 to establish the commutator estimates
for the Dirichlet-to-Neumann map, using the method of layer potentials.
Indeed, let
$$
\mathcal{S} (f)(x)=\int_{\partial\Omega} \Gamma (x,y) f(y)\, d\sigma(y)
$$
denote the single layer potential for $\mathcal{L}$, where $\Gamma (x,y)$
is the matrix of fundamental solutions for $\mathcal{L}$ in $\mathbb{R}^d$
(one may suitably modify $A$ outside $\Omega$ for the existence of $\Gamma (x,y)$).
The conormal derivative of $u=\mathcal{S} (f)$ is given by
$\big( (1/2) I +\mathcal{K}\big) f$, where $\mathcal{K}$ is a singular integral
operator on $\partial\Omega$.
It follows that
$$
\Lambda (f) = \big ( (1/2)I +\mathcal{K} \big) S^{-1} (f),
$$
where $S=\mathcal{S}|_{\partial\Omega}$.
This implies that
$$
\aligned
\big[ \Lambda, g\big] f
&=\big( (1/2) I +\mathcal{K} \big) \big[ {S}^{-1}, g\big] f +\big[\mathcal{K}, g\big] S^{-1} (f)\\
&=-\big( (1/2) I +\mathcal{K} \big) S^{-1} \big[ {S}, g\big] S^{-1} f +\big[\mathcal{K}, g\big] S^{-1} (f).
\endaligned
$$
Under the assumption that $A$ is elliptic and H\"older continuous,
it is known that for $1<p<\infty$,
the operator $\mathcal{K}$ is bounded on $L^p(\partial\Omega)$,
and $S$ is bounded from $L^p(\partial\Omega)$ to $W^{1, p}(\partial\Omega)$.
If, in addition, $A^*=A$, then $\big( (1/2) I +\mathcal{K}\big)$ is
invertible on $L^2_0(\partial\Omega)$, the subspace of $L^2$ functions with mean value zero.
Furthermore, if $d\ge 3$, $S$ is an isomorphism from $L^2(\partial\Omega)$ to $W^{1,2}(\partial\Omega)$
(see \cite{Verchota-1984, Mitrea-Taylor-2000, Kenig-Shen-2}).
As a result, we obtain
\begin{equation}\label{1.10-1}
\aligned
&\|\big[ \Lambda, g\big] f\|_{L^2(\partial\Omega)}\\
& \le C \left\{ \| \big[ S, g\big]\|_{W^{-1,2}(\partial\Omega) \to W^{1,2}(\partial\Omega)}
+\| \big[\mathcal{K}, g\big] \|_{W^{-1, 2}(\partial\Omega)
\to L^2(\partial\Omega)} \right\} \| f\|_{L^2(\partial\Omega)},
\endaligned
\end{equation}
where $W^{-1, 2}(\partial\Omega)=H^{-1} (\partial\Omega)$ 
is the dual of $W^{1,2}(\partial\Omega)=H^1(\partial\Omega)$.
This reduces the proof of the estimate (\ref{1.1}) to that of estimates
of commutators $[S, g]$ and $[\mathcal{K}, g]$.

%%%%%%%%%%%%%%%%%%%%%%%%%%%%%%%%%%%%%%%%%%%%%%%%%%%%%

\section{Dahlberg's bilinear estimate, Part I}

Let $\Omega$ be a bounded Lipschitz domain in $\mathbb{R}^d$ and
 $\delta (x)=\text{dist}(x, \partial\Omega)$.
 For a function $u$ in $\Omega$, the nontangential maximal function $(u)^*$ is defined by
\begin{equation}\label{NT}
(u)^* (Q)=\sup \big\{ | u(x)|: \ x\in \Omega \text{ and } |x-Q|< \alpha_0 \, \delta (x) \big\}
\end{equation}
for $Q\in \partial\Omega$, where $\alpha_0=\alpha_0 (\Omega)>1$ is a fixed large constant.

Let $\nu$ be a nonnegative measure on $\Omega$. We call $\nu$ a Carleson measure if
$$
\| \nu\|_{\mathcal{C}}: =\sup 
\left\{ \frac{\nu (B(Q, r)\cap \Omega)}{r^{d-1}}: Q\in \partial\Omega \text{ and } 0<r< \text{diam}(\Omega)\right\}
$$
is finite. The defining property of Carleson measures is that
\begin{equation}\label{Carleson-property}
\int_\Omega | u|\, d\nu \le C \|\nu\|_{\mathcal{C}} \int_{\partial\Omega} (u)^*\, d\sigma, 
\end{equation}
where $C$ depends only on the Lipschitz character of $\Omega$
(see e.g. \cite[Section 7.3] {Grafakos} for the case of $\mathbb{R}^d_+$).
The goal of this section is to establish the following theorem.

\begin{thm}\label{theorem-2.1}
Let $\Omega$ be a bounded Lipschitz domain in $\mathbb{R}^d$.
Let $\mathcal{L}=-\text{\rm div} (A(x)\nabla)$ with $A(x)$satisfying the ellipticity condition (\ref{ellipticity}).
Also assume that $A\in C^1(\Omega)$ and that
\begin{equation} \label{2.1-1}
d\nu=|\nabla A(x)|^2\, \delta (x)\, dx \text{ is a Carleson measure with norm $\| \nu\|_{\mathcal{C}}$
less than } C_0.
\end{equation}
Let $u\in H^1(\Omega; \mathbb{R}^m)$ be a weak solution of $\mathcal{L} (u)=0$ in $\Omega$.
Then, for any $v\in H^1(\Omega; \mathbb{R}^{d\times m})$,
\begin{equation}\label{2.1-2}
\aligned
& \left| \int_\Omega \nabla u \cdot v\, dx \right|\\
&\le C \left\{ \int_\Omega |\nabla u(x)|^2\,  \delta (x) \, dx \right\}^{1/2}
\left\{ \int_\Omega |\nabla v(x)|^2\, \delta (x) \, dx
+\int_{\partial\Omega} | (v)^*|^2\, d\sigma \right\}^{1/2},
\endaligned
\end{equation}
where $C$ depends only on $d$, $m$, $\mu$, $C_0$, and $\Omega$.
\end{thm}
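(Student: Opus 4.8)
The plan is to reduce the bilinear estimate to a square-function bound by integrating by parts and exploiting the equation $\mathcal{L}(u)=0$. First I would choose a smooth vector field $\psi$ on $\Omega$, vanishing on $\partial\Omega$, with $\psi\sim\delta(x)$ near the boundary and $|\nabla\psi|\le C$ — concretely a mollified version of $\delta(x)$ times a unit vector field transverse to $\partial\Omega$, which exists because $\Omega$ is Lipschitz. The key algebraic step is to write $\nabla u\cdot v$ in a form that can be integrated by parts against $\psi$: using that $\partial_i(a_{ij}^{\alpha\beta}\partial_j u^\beta)=0$ weakly, one finds an identity of the schematic form
\begin{equation}\label{pp-ibp}
\int_\Omega \nabla u\cdot v\,dx
= -\int_\Omega \psi_k\,\partial_k\big(\nabla u\big)\cdot v\,dx
  -\int_\Omega \psi_k\,\nabla u\cdot \partial_k v\,dx
  + (\text{boundary terms that vanish}),
\end{equation}
after writing $1 = \partial_k\psi_k + (1-\partial_k\psi_k)$ or, more carefully, testing the equation for $u$ against $\psi_k\,\partial_k(\text{something})$. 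The honest way to organize this — following \cite{DKPV} — is to test $\mathcal{L}(u)=0$ against a function built from $v$ and $\psi$, so that second derivatives of $u$ are never taken alone but always appear paired with a factor of $\delta$; the term $|\nabla A|$ shows up precisely when the derivative falls on the coefficients, and that is where hypothesis \eqref{2.1-1} enters.

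Carrying this out, I expect the right-hand side of \eqref{2.1-2} to emerge from three types of terms. The first is the ``good'' square-function pairing $\int_\Omega \psi_k(\partial_k\nabla u)\cdot(\text{stuff})$, which after one more integration by parts (moving $\partial_k$ off $\nabla u$) and Cauchy--Schwarz is controlled by $\big(\int|\nabla u|^2\delta\big)^{1/2}$ times $\big(\int|\nabla v|^2\delta+\int|\nabla u|^2\delta\big)^{1/2}$; the self-improving $\int|\nabla u|^2\delta$ on the right is harmless since it also sits on the left factor, or can be absorbed. The second type is $\int_\Omega |\nabla u|\,|v|\,|\nabla\psi|\,dx$, where $|\nabla\psi|\le C$ but is \emph{not} small; here one does not integrate by parts but instead writes $|\nabla u|\le |\nabla u|\,\delta^{1/2}\cdot\delta^{-1/2}$ and uses Cauchy--Schwarz against $|v|^2\delta^{-1}$, then observes $\int_\Omega |v|^2\delta^{-1}\,dx\le C\int_{\partial\Omega}|(v)^*|^2\,d\sigma$ by the standard nontangential-maximal-function estimate (this is essentially \eqref{Carleson-property} with the Carleson measure $\delta^{-1}\chi_{\{\delta<\text{const}\}}dx$, or a direct geometric argument). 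The third type is the coefficient term $\int_\Omega |\nabla A|\,|\nabla u|\,|v|\,\delta\,dx$; here I apply Cauchy--Schwarz to pull out $\big(\int|\nabla u|^2\delta\big)^{1/2}$ and bound the remaining $\int_\Omega |v|^2\,|\nabla A|^2\,\delta\,dx = \int_\Omega |v|^2\,d\nu$ by $\|\nu\|_{\mathcal{C}}\int_{\partial\Omega}|(v)^*|^2\,d\sigma\le C\,C_0\int_{\partial\Omega}|(v)^*|^2\,d\sigma$ using \eqref{Carleson-property} and \eqref{2.1-1}.

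The main obstacle is the bookkeeping of the integration by parts near $\partial\Omega$: since $v$ is only in $H^1$ with a nontangential maximal function in $L^2$, one cannot assume $v$ vanishes on the boundary, so the boundary terms in \eqref{pp-ibp} do not automatically disappear. I would handle this by a limiting/approximation argument — work first on the subdomains $\Omega_t=\{x\in\Omega:\delta(x)>t\}$, where everything is smooth and interior estimates apply, derive \eqref{2.1-2} with constants uniform in $t$, and then let $t\to0$, using the $L^2$ control of $(v)^*$ and $(u)^*$ to pass to the limit (and to control any genuine boundary contribution on $\partial\Omega_t$, which converges to $0$ because it is an $\int_{\partial\Omega_t}$ of something times $\delta\sim t$). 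A secondary technical point is constructing $\psi$ with the stated properties and computing $\partial_k\psi_k$; this is routine for Lipschitz domains via the standard cone/regularized-distance construction, so I would cite it rather than belabor it. Once the approximation scheme and the three term estimates above are in place, collecting them and absorbing the self-improving square-function term yields \eqref{2.1-2} with $C=C(d,m,\mu,C_0,\Omega)$.
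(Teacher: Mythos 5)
There is a genuine gap, and it sits exactly at the point your scheme is forced to confront: the remainder produced by using a regularized distance $\psi\sim\delta$ instead of a vector field with divergence identically $1$. Your ``second type'' term $\int_\Omega|\nabla u|\,|v|\,|\nabla\psi|\,dx$ is estimated by Cauchy--Schwarz against $\int_\Omega|v|^2\,\delta(x)^{-1}\,dx$, and you then claim $\int_\Omega|v|^2\,\delta(x)^{-1}\,dx\le C\int_{\partial\Omega}|(v)^*|^2\,d\sigma$ because $\delta^{-1}\chi_{\{\delta<c\}}\,dx$ is ``essentially'' a Carleson measure. It is not: for a Carleson box $B(Q,r)\cap\Omega$ each dyadic layer $\{2^{-k-1}r<\delta<2^{-k}r\}$ contributes about $r^{d-1}$ to $\int\delta^{-1}dx$, so the Carleson norm diverges logarithmically, and the claimed inequality already fails for $v\equiv 1$ (left side infinite, right side $\sigma(\partial\Omega)$). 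Note the contrast with $[\delta(x)]^{\eta-1}dx$, $\eta>0$, which the paper does use as a Carleson measure elsewhere; the endpoint $\eta=0$ is exactly the borderline case that fails. Since an unweighted term $\int_\Omega|\nabla u|\,|v|\,dx$ cannot in general be bounded by the right-hand side of (\ref{2.1-2}), this error term is fatal to the argument as written, not a technicality to be absorbed.

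The paper's proof is organized precisely to avoid this term. After a partition of unity it flattens the boundary with the Kenig--Stein bi-Lipschitz map $\Phi$, under which ellipticity and the Carleson condition (\ref{2.1-1}) are preserved, and then works in $\mathbb{R}^d_+$ with the exact identity $\nabla u\cdot v=\nabla u\cdot v\,\frac{\partial t}{\partial t}$, i.e.\ with the vector field $t\,e_d$, whose divergence is exactly $1$; the integration by parts in (\ref{2.1-6}) therefore produces no analogue of your $(1-\operatorname{div}\psi)$ or $|\nabla\psi|$ remainder. (In effect, the flattening map manufactures the vector field with divergence $1$ that your regularized distance does not provide; the price, the Carleson measure $|\nabla^2\Phi|^2\,t\,dx'dt$, is affordable.) The remaining second-derivative terms are then split: mixed derivatives $\partial_t\partial_{x_i}u$, $i<d$, are integrated by parts in $x'$ onto $v$ and handled by Cauchy--Schwarz, while the pure normal derivative $\partial_t^2u$ is rewritten via the equation as in (\ref{2.1-10}), after inverting $a_{dd}^{\alpha\beta}$, with all error terms carrying factors $|\nabla A|$ or $|\nabla E|$ whose squares times $t$ are genuine Carleson measures; this is where (\ref{2.1-1}) and (\ref{Carleson-property}) enter, exactly as in your ``third type'' term, which is the one part of your outline that matches the paper. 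Your description of the ``first type'' term is also too loose --- moving $\partial_k$ back off $\nabla u$ simply undoes the previous step unless the derivative is tangential or the equation is invoked --- but the decisive defect is the false $\delta^{-1}$ Carleson claim. To repair your approach you would either have to build a vector field on the Lipschitz domain with divergence exactly $1$ (which is what the flattening accomplishes) or find a genuinely different way to control $\int_\Omega|\nabla u|\,|v|\,dx$.
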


\begin{proof}
By a partition of unity it suffices to estimate the integral of $\nabla u\cdot v$ over
$B(Q, r_0)\cap \Omega$ with $v\in H_0^1(B(Q, r_0))$, where
$Q\in \partial\Omega$ and $r_0>0$ is small.
Since the condition (\ref{2.1-1}) is translation and rotation invariant, we may assume that
$Q=0$ and
$$
B(0, C_0 r_0)\cap \Omega 
=B(0, C_0 r_0)\cap \big\{ (x^\prime, t)\in \mathbb{R}^d: x^\prime \in \mathbb{R}^{d-1}
\text{ and } t>\psi (x^\prime) \big\},
$$
where $\psi$ is a Lipschitz function on $\mathbb{R}^{d-1}$, $\psi (0)=0$, and
$C_0=10 (\|\nabla \psi\|_\infty +1)$.
Thus it is enough to establish the estimate  (\ref{2.1-2}) with $\Omega$ replaced by
$$
D=\big\{ (x^\prime, t)\in \mathbb{R}^d: x^\prime \in \mathbb{R}^{d-1}
\text{ and } t>\psi (x^\prime) \big\},
$$
assuming that $u$ is a weak solution of $\mathcal{L}(u)=0$ in $B(0, r_0)\cap D$
and $v\in H^1_0( B(0, r_0))$.
Using a special change of variables invented by C. Kenig and E. Stein,
we may further reduce the problem to the case of the upper half-space $\mathbb{R}^d_+
=\big\{ (y^\prime, s )\in \mathbb{R}^d: \ y^\prime \in \mathbb{R}^{d-1} \text{ and } s >0 \big\}$.
Indeed, consider the bi-Lipschitz map from $\mathbb{R}^d_+$ to $D$, defined by
$$
(y^\prime, s)\to \Phi (y^\prime, s)=\big(y^\prime, cs + \eta_s * \psi (y^\prime)\big)= \big(y^\prime, F(y^\prime, s)\big),
$$
where $\eta_s (y^\prime) =s^{1-d} \eta (y^\prime/s)$ is a smooth compactly supported bump
function and the constant $c=c(d, \|\nabla\psi\|_\infty)>0$ is so large that $\frac{\partial F}{\partial s} \ge 1$.
The key observations here are (1) $c\le |\nabla \Phi (y^\prime,s) |\le C$;  (2)
$|\nabla^2 \Phi (y^\prime ,s)|^2\, s \, dy^\prime ds$ is a Carleson measure on $\mathbb{R}^d_+$; and (3)
all constants depend only on $d$ and $\|\nabla\psi\|_\infty$ (see e.g. \cite{Dahlberg-1986,DKPV}).
It follows that the ellipticity condition (\ref{ellipticity}) and
the Carleson condition (\ref{2.1-1})
are preserved under the change of variables $(x^\prime, t)=\Phi(y^\prime, s)$.
 As a result, we only need to show that 
\begin{equation}\label{2.1-4}
\aligned
& \left| \int_{\mathbb{R}^d_+}  \nabla u \cdot v\, dx^\prime dt \right|\\
&\le C \left\{ \int_{\mathbb{R}^d_+}  |\nabla u(x^\prime, t)|^2\,  t \, dx^\prime dt \right\}^{1/2}
\left\{ \int_{\mathbb{R}^d_+}  |\nabla v(x^\prime, t)|^2\, t \, dx^\prime dt
+\int_{\mathbb{R}^{d-1}} | (v)^*|^2\, dx^\prime \right\}^{1/2},
\endaligned
\end{equation}
where $u$ is a weak solution of $\mathcal{L} (u) =0$ in $B(0, r_0)\cap \mathbb{R}^d_+$
and $v\in H^1_0( B(0,r_0))$. By suitably redefining $A$ outside $B(0, 3r_0)$, 
we may assume that $|\nabla A (x^\prime, t)|^2 \, t \, dx^\prime dt$
is a Carleson measure on $\mathbb{R}^d_+$ with a norm depending only on
$\mu, $, $C_0$, and $\|\nabla\psi\|_\infty$. 

We now proceed to prove the estimate (\ref{2.1-4}), using an approach found in \cite{DKPV}.
We begin by writing
\begin{equation}\label{2.1-6}
\aligned
\int_{\mathbb{R}^d_+} \nabla u\cdot v \, dx^\prime dt
&=\int_{\mathbb{R}^d_+} \nabla u\cdot v\cdot \frac{\partial t}{\partial t}\, dx^\prime dt\\
&=-\int_{\mathbb{R}^d_+} \frac{\partial }{\partial t} \nabla u \cdot v \cdot t \, dx^\prime dt
-\int_{\mathbb{R}^d_+} \nabla u\cdot \frac{\partial v}{\partial t} \, t \, dx^\prime dt,
\endaligned
\end{equation}
where we have used the integration by parts. By the Cauchy inequality, the second term in the right hand side of
(\ref{2.1-6}) is dominated in absolute value by the right hand side of (\ref{2.1-4}).
As for the first term,
it is easy to see that an integration by parts in $x^\prime$ and the  Cauchy inequality may be used to handle the integral of
$\frac{\partial}{\partial t} \left( \frac{\partial  u}{\partial x_i}\right) \cdot v \cdot  t$
for $i=1, \dots, d-1$.
As a result, it remains to bound the integral
\begin{equation}\label{2.1-8}
\int_{\mathbb{R}^d_+} \frac{\partial^2 u}{\partial t^2} \cdot v \cdot t \, dx^\prime dt,
\end{equation}
by the right hand side of (\ref{2.1-4}).
This will be done by using the assumption that $u$ is a solution of a second-order elliptic system.

Indeed, using $\mathcal{L}(u)=0$ in $B(0, r_0)\cap \mathbb{R}_+^d$, we may write
\begin{equation}\label{2.1-10}
a_{dd}^{\alpha\beta} \frac{\partial^2 u^\beta}{\partial t^2}
=\nabla_{x^\prime} \left\{ F^\alpha(x^\prime, t) \right\}
+G^\alpha (x^\prime,t),
\end{equation}
where $\nabla_{x^\prime} = (\frac{\partial }{\partial x_1}, \dots, \frac{\partial}{\partial x_{d-1}})$,
$F=(F^\alpha)$ and $G=(G^\alpha)$ satisfy $|F|\le C |\nabla u|$ and $|G|\le C |\nabla A|\, |\nabla u|$.
Note that the $m\times m$ matrix $\big(a_{dd}^{\alpha\beta} \big)$ is invertible by (\ref{ellipticity}),
and its inverse $E=\big( b^{\alpha\beta}\big)$ satisfies the same type of ellipticity and Carleson conditions as
$A$. This allows us to use the integration by parts in $x^\prime$ to obtain
\begin{equation} \label{2.1-12}
\aligned
\left|
\int_{\mathbb{R}^d_+} \frac{\partial^2 u}{\partial t^2} \cdot v \cdot t \, dx^\prime dt\right|
& \le C \int_{\mathbb{R}^{d}_+}
\bigg\{  |F|\,  |\nabla \{ E\cdot v \}|
+|E|\,  |\nabla A| \, |G|\,  |v|\bigg\} t\, dx^\prime dt\\
& \le C \int_{\mathbb{R}^d_+} 
\bigg\{ |\nabla u|\, |\nabla E|\, |v| + |\nabla u|\, |\nabla v|
+ |\nabla A|\, |\nabla u|\, |v| \bigg\} t\, dx^\prime dt.
\endaligned
\end{equation}
It follows  by the Cauchy inequality that the left hand side of (\ref{2.1-12})
is bounded by
\begin{equation}\label{2.1-13}
C \left\{ \int_{\mathbb{R}^d_+} |\nabla u|^2\, t \, dx^\prime dt\right\}^{1/2}
\left\{ \int_{\mathbb{R}^d_+} |\nabla v|^2\, t \, dx^\prime dt
+\int_{\mathbb{R}^d_+}  |v|^2 \big\{ |\nabla E|^2 +|\nabla A|^2\big\} t \, dx^\prime dt\right\}^{1/2}.
\end{equation}
Finally, since $\big\{ |\nabla E|^2 +|\nabla A|^2\big\} t\, dx^\prime dt$ is a Carleson measure on
$\mathbb{R}^d_+$, we see that (\ref{2.1-13}) is dominated by the right hand side of (\ref{2.1-4}).
This completes the proof.
\end{proof}

%%%%%%%%%%%%%%%%%%%%%%%%%%%%%%%%%%%%%%%%%%%%%%%

\section{Dahlberg's bilinear estimate, Part II}

In this section we show that the bilinear estimate (\ref{2.1-2}) holds
if $A$ is elliptic, symmetric, and H\"older continuous.
We mention that under these assumptions on $A$, the Dirichlet problem (\ref{Dirichlet})
is uniquely solvable for $f\in L^2(\partial\Omega; \mathbb{R}^m)$.
Moreover, the solution $u$ satisfies the nontangential maximal function and square function
estimates:
\begin{equation}\label{max-square}
\int_{\partial\Omega} |(u)^*|^2\, d\sigma
+\int_\Omega |\nabla u (x)|^2\, \delta (x)\, dx
\le C\, \int_{\partial\Omega} |f|^2\, d\sigma,
\end{equation}
where $C$ depends only on $A$ and $\Omega$ (see e.g. \cite{Mitrea-Taylor-2000, Kenig-Shen-2}).

\begin{thm}\label{theorem-3.1}
Let $\Omega$ be a bounded Lipschitz domain in $\mathbb{R}^d$.
Let $\mathcal{L}=-\text{\rm div} (A(x)\nabla )$
with $A(x)$ satisfying (\ref{ellipticity}). Also assume that $A^*=A$ and that
$A$ is H\"older continuous
in $\mathbb{R}^d$.
Let $u\in H^1(\Omega; \mathbb{R}^m)$ be a weak solution of $\mathcal{L}(u)=0$ in $\Omega$.
Then, for any $v\in H^1(\Omega; \mathbb{R}^{d\times m})$,
\begin{equation}\label{3.1-0}
\left|\int_\Omega \nabla u\cdot v\, dx\right|
\le C \| u\|_{L^2(\partial\Omega)} 
\left\{ \int_\Omega |\nabla v|^2\, \delta (x)\, dx
+\int_{\partial\Omega} |(v)^*|^2\, d\sigma \right\}^{1/2},
\end{equation}
where the constant $C$ depends at most on $A$ and $\Omega$.
\end{thm}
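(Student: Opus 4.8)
The plan is to deduce Theorem \ref{theorem-3.1} from Theorem \ref{theorem-2.1} by regularizing $A$ and carrying the regularization error as a controlled right-hand side. The first step is to reduce (\ref{3.1-0}) to the bilinear estimate
\begin{equation*}
\left|\int_\Omega \nabla u\cdot v\, dx\right|
\le C\left(\int_\Omega |\nabla u(x)|^2\, \delta(x)\, dx\right)^{1/2}
\left(\int_\Omega |\nabla v|^2\, \delta\, dx +\int_{\partial\Omega} |(v)^*|^2\, d\sigma\right)^{1/2},
\tag{$\star$}
\end{equation*}
in which $\|u\|_{L^2(\partial\Omega)}$ does not appear. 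This reduction is immediate: the weak solution $u\in H^1(\Omega;\mathbb{R}^m)$ has boundary trace $u|_{\partial\Omega}\in H^{1/2}(\partial\Omega)\subset L^2(\partial\Omega)$, so since $A$ is elliptic, symmetric and H\"older continuous, the estimate (\ref{max-square}) applies and gives $\int_\Omega |\nabla u|^2\,\delta\, dx\le C\|u\|_{L^2(\partial\Omega)}^2$; then ($\star$) yields (\ref{3.1-0}). (The symmetry of $A$ is used only at this point.)

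Next I would regularize. Using that $A$ is H\"older continuous on $\mathbb{R}^d$ with some exponent $\lambda\in(0,1]$, I would mollify $A$ at the scale $\delta(x)$ to obtain $A_*\in C^1(\Omega)$ with the same ellipticity constants (a positive mollifier preserves (\ref{ellipticity})) and with $|A(x)-A_*(x)|\le C\,\delta(x)^{\lambda}$ and $|\nabla A_*(x)|\le C\,\delta(x)^{\lambda-1}$ on $\Omega$. In particular $|\nabla A_*(x)|^2\,\delta(x)\,dx\le C\,\delta(x)^{2\lambda-1}\,dx$ is a Carleson measure on $\Omega$ (what matters is $2\lambda-1>-1$), so $A_*$ satisfies the hypotheses of Theorem \ref{theorem-2.1}. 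The point of this construction is that $u$ now satisfies, weakly in $\Omega$,
\begin{equation*}
-\text{div}(A_*\nabla u)=\text{div}(h),\qquad
h_i^\alpha=(a_{ij}^{\alpha\beta}-a_{ij,*}^{\alpha\beta})\,\frac{\partial u^\beta}{\partial x_j},\qquad |h|\le C\,\delta(x)^{\lambda}\,|\nabla u|.
\end{equation*}
It is tempting to split $u=w+(u-w)$ with $w$ solving $\text{div}(A_*\nabla w)=0$ with the same boundary data, apply Theorem \ref{theorem-2.1} to $w$, and bound $u-w$ by an energy estimate; but that estimate only produces the weight $\delta^{2\lambda}$, which is not controlled by $\delta$ when $\lambda<\tfrac12$. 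So instead I would carry the inhomogeneity $\text{div}(h)$ directly through the proof of Theorem \ref{theorem-2.1}.

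Concretely, I would prove ($\star$) by rerunning the proof of Theorem \ref{theorem-2.1} for the operator $-\text{div}(A_*\nabla)$ with the right-hand side $\text{div}(h)$. Its reductions (partition of unity, passage to a graph domain, the Kenig--Stein change of variables to $\mathbb{R}^d_+$) carry over unchanged and preserve both the Carleson condition on $\nabla A_*$ and the bound $|h|\le C\,\delta^{\lambda}|\nabla u|$, since $\delta$ is comparable across the transformations and $\nabla u$ transforms with a bounded Jacobian. After the integration by parts (\ref{2.1-6}) one is again reduced to bounding $\int_{\mathbb{R}^d_+}\partial_t^2 u\cdot v\cdot t\, dx' dt$; solving for $a_{dd,*}^{\alpha\beta}\,\partial_t^2 u^\beta$ from the equation gives (\ref{2.1-10}) with $F$ enlarged by the components $h_i$ ($i<d$), so still $|F|\le C|\nabla u|$, and with an extra summand $-\partial_t h_d$ in $G$. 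The terms already present are handled exactly as in (\ref{2.1-12})--(\ref{2.1-13}), now with $A_*$ in place of $A$ and $E_*=(a_{dd,*}^{\alpha\beta})^{-1}$ (which obeys $|\nabla E_*|\le C|\nabla A_*|\le C\delta^{\lambda-1}$). For the new summand, an integration by parts in $t$ gives
\begin{equation*}
\int_{\mathbb{R}^d_+}\partial_t h_d\cdot (E_* v)\, t\, dx' dt
=-\int_{\mathbb{R}^d_+} h_d\cdot\bigl\{(\partial_t E_*)\,v\,t+E_*(\partial_t v)\,t+E_* v\bigr\}\, dx' dt,
\end{equation*}
and, using $|h_d|\le C\,t^{\lambda}|\nabla u|$ and $|\nabla E_*|\le C t^{\lambda-1}$, the Cauchy inequality bounds each of the three pieces by
\begin{equation*}
C\left(\int_{\mathbb{R}^d_+}|\nabla u|^2\, t\, dx' dt\right)^{1/2}
\left(\int_{\mathbb{R}^d_+}|v|^2\bigl(t^{4\lambda-1}+t^{2\lambda-1}\bigr)\, dx' dt+\int_{\mathbb{R}^d_+}|\nabla v|^2\, t\, dx' dt\right)^{1/2}.
\end{equation*}
Since $\lambda>0$, both $t^{4\lambda-1}\,dx' dt$ and $t^{2\lambda-1}\,dx' dt$ are Carleson measures on $\mathbb{R}^d_+$, so by (\ref{Carleson-property}) the first integral is $\le C\int_{\mathbb{R}^{d-1}}|(v)^*|^2\, dx'$. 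This proves ($\star$) on $\mathbb{R}^d_+$; undoing the reductions gives ($\star$), hence (\ref{3.1-0}).

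I expect the heart of the matter to be precisely the last computation: the regularization error enters as $\text{div}(h)$ with $|h|\le C\delta^{\lambda}|\nabla u|$, and the extra terms it creates carry densities of the form $t^{c\lambda-1}\,dx' dt$ with $c>0$, which are Carleson exactly because the exponent of $t$ exceeds $-1$, i.e. because $\lambda>0$. This is where H\"older continuity (rather than merely Dini or uniform continuity) of $A$ is used; everything else runs parallel to Section 2.
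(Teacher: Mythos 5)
Your proposal is correct in substance but takes a genuinely different route from the paper. The paper performs exactly the decomposition you dismiss: it takes $B$ to be the harmonic extension of $A|_{\partial\Omega}$ (playing the role of your $A_*$, with $|\nabla B|\le C\delta^{\eta-1}$ and $|A-B|\le C\delta^{\eta}$), writes $u=w+(u-w)$ where $\text{div}(B\nabla w)=0$ in $\Omega$ and $w=u$ on $\partial\Omega$, applies Theorem \ref{theorem-2.1} to $w$, and then controls $\int_\Omega|\nabla(u-w)|^2\,\delta^{1-\eta}\,dx$ not by the energy estimate but by the weighted inequality (\ref{3.2-3}) of Remark \ref{remark-3.1}, which follows from Lemma \ref{lemma-3.1} (fundamental-solution estimates from \cite{KLS2}) and interpolation; that lemma is exactly what overcomes the $\delta^{2\lambda}$-versus-$\delta$ mismatch you correctly identify for $\lambda<1/2$. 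Your route avoids Lemma \ref{lemma-3.1} and the \cite{KLS2} machinery entirely by keeping $u$ itself and carrying the error $\text{div}(h)$, $|h|\le C\delta^{\lambda}|\nabla u|$, through the Section 2 argument; the new terms are estimated correctly, and the densities $t^{2\lambda-1}$, $t^{4\lambda-1}$ are indeed Carleson after the same localization to the support of $v$ (equivalently $t\lesssim r_0$) that the paper already invokes when it redefines $A$ outside $B(0,3r_0)$ --- they are not Carleson on all of $\mathbb{R}^d_+$ as stated. What your approach buys is self-containedness; what it costs is reopening the proof of Theorem \ref{theorem-2.1} with an inhomogeneous right-hand side.

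One point needs more care than you give it. For merely H\"older continuous $A$ the solution $u$ need not belong to $H^2_{\rm loc}$ (already in one dimension $u'=c/a$ need not be differentiable), so $\partial_t\nabla u$, $\partial_t^2u$ and $\partial_t h_d$ do not exist as functions, and the analogue of (\ref{2.1-10}) with the extra summand $-\partial_t h_d$ is only formal; note that in the paper's Theorem \ref{theorem-2.1} this issue is absent because $A\in C^1(\Omega)$ gives $u\in H^2_{\rm loc}$. The fix is standard but should be stated: run the key step in the weak formulation, testing $-\text{div}(A_*\nabla u)=\text{div}(h)$ against $\psi=E_*\,v_d\,t$ (which lies in $H^1_0$ of the half-ball thanks to the factor $t$), so that all derivatives land on the test function from the outset; the resulting terms are precisely those you estimate, the $h$-term appearing already integrated by parts as in your display, and the cross terms $a_{dj,*}\partial_j u\cdot E_* v_d$ with $j<d$ that arise are unweighted tangential-derivative integrals absorbed by the same tangential integration by parts used for the $i<d$ components. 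Likewise, the variable-scale mollification should be built with a regularized distance so that $A_*\in C^1(\Omega)$. With these repairs your argument is a complete proof of (\ref{3.1-0}).
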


Theorem \ref{theorem-3.1} is a consequence of Theorem \ref{theorem-2.1} and the following lemma.

\begin{lemma}\label{lemma-3.1}
Assume that $\Omega$ and $A$ satisfy the same conditions as in Theorem \ref{theorem-3.1}.
Let $u\in H^1_0(\Omega; \mathbb{R}^m)$ be a weak solution of
$\mathcal{L}(u)=\text{\rm div} (f)$, where $f=(f_i^\alpha)\in L^2(\Omega; \mathbb{R}^{d\times m})$.
Then
\begin{equation}\label{3.1-2}
\int_\Omega |\nabla u(x)|^2 \, \delta (x) dx
\le C \int_\Omega |f(x)|^2\,  \delta (x) \, \phi (\delta (x))\, dx,
\end{equation}
where $ \phi (t) = \big\{ |\ln( t )|+1 \big\}^2$, and
$C$ depends only on $A$ and $\Omega$.
\end{lemma}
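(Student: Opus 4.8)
The plan is to represent $u$ through the Green matrix of $\mathcal L$ in $\Omega$ and to recast \eqref{3.1-2} as a weighted $L^2$ bound for the Calder\'on--Zygmund-type operator built from the mixed second derivatives of that Green matrix. Let $G(x,y)=\big(G^{\alpha\beta}(x,y)\big)$ be the Green matrix ($d\ge 3$; for $d=2$ one uses the standard logarithmic normalization and the argument is unchanged). Since $u\in H^1_0(\Omega;\mathbb R^m)$ solves $\mathcal L(u)=\text{div}(f)$, an integration by parts in the defining identity for $G$ gives $u^\alpha(x)=-\int_\Omega \partial_{y_j}G^{\alpha\beta}(x,y)\,f_j^\beta(y)\,dy$, hence (formally, in the principal-value sense) $\partial_{x_i}u^\alpha(x)=-\int_\Omega \partial_{x_i}\partial_{y_j}G^{\alpha\beta}(x,y)\,f_j^\beta(y)\,dy$. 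Setting $g=\delta^{1/2}\phi(\delta)^{1/2}f$, the estimate \eqref{3.1-2} is precisely the $L^2(\Omega)$-boundedness of the integral operator with kernel
\[
\mathcal K(x,y)=\delta(x)^{1/2}\,\nabla_x\nabla_y G(x,y)\,\delta(y)^{-1/2}\,\phi(\delta(y))^{-1/2},
\]
which I would establish by a Schur test with a test function of the form $p(x)=p(\delta(x))$.

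The inputs are the pointwise (or, in the Lipschitz setting, $L^2$-averaged) estimates for $G$ available under our hypotheses. Interior Schauder estimates applied to $G(\cdot,y)$, which is $\mathcal L$-harmonic away from $y$ because $A$ is H\"older, give $|\nabla_x\nabla_y G(x,y)|\le C|x-y|^{-d}$ whenever $|x-y|\le c\min\{\delta(x),\delta(y)\}$, where also $\delta(x)\approx\delta(y)$. When $|x-y|$ is large compared with $\delta(x)$ or $\delta(y)$, the boundary De Giorgi--Nash--Moser estimate (solutions of $\mathcal L$ or $\mathcal L^{*}$ vanishing on a boundary portion decay like $(\delta/r)^{\kappa}$ for some $\kappa\in(0,1]$ depending only on $\mu$ and $\Omega$), combined with Caccioppoli's inequality and the square function bound \eqref{max-square}, upgrades this to a Carleson-type bound carrying gain factors $(\delta(x)/|x-y|)^{\kappa}$ and $(\delta(y)/|x-y|)^{\kappa}$. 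In a mere Lipschitz domain this is only available at the level of the $L^2$ averages $\big(|2Q|^{-1}\int_{2Q}|\nabla_x\nabla_y G(x,y)|^2\,dx\big)^{1/2}$ over Whitney cubes $Q$, so one runs the whole argument with those averages in place of pointwise values, following \cite{DKPV}.

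With such bounds, one computes $\int_\Omega|\mathcal K(x,y)|\,p(y)\,dy$ and $\int_\Omega|\mathcal K(x,y)|\,p(x)\,dx$ by splitting into dyadic shells $|x-y|\approx 2^{j}$ and, inside each shell, into the regions $\delta\gtrsim|x-y|$ and $\delta\lesssim|x-y|$: the near-diagonal part is $O(1)$, the far part converges thanks to the $\kappa$-gain, and summing the shells between scale $\delta(\cdot)$ and $\text{diam}(\Omega)$ produces a factor $\log(\text{diam}(\Omega)/\delta)$. The weight $\phi(\delta)=(|\ln\delta|+1)^2$ in \eqref{3.1-2} is exactly what absorbs this logarithm in each of the two Schur directions. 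On the core $\{\delta\ge c\}$ the weights are bounded above and below, and one simply invokes the energy estimate $\int_\Omega|\nabla u|^2\,dx\le C\int_\Omega|f|^2\,dx$.

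The main obstacle is the passage through the boundary: in a general Lipschitz domain there is no pointwise bound on $\nabla_x G$, let alone on $\nabla_x\nabla_y G$, up to $\partial\Omega$, and for systems the lack of a maximum principle precludes boundary-Harnack arguments, so one must genuinely work with $L^2$ averages over Whitney cubes, which is where the analysis of \cite{DKPV} enters through \eqref{max-square}. The second delicate point is purely quantitative: one must check that the power of $|\ln\delta|$ is exactly $2$. This is dictated by the criticality of the weight $\delta\,dx$: $a=1$ is the endpoint exponent in the weighted Hardy inequality $\int_\Omega |u|^2\,\delta^{a-2}\,dx\le C(a)\int_\Omega|\nabla u|^2\,\delta^{a}\,dx$, which holds for $u\in H^1_0(\Omega)$ when $a<1$ but fails at $a=1$, so every logarithm must be tracked sharply. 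As a variant avoiding Green functions one may argue by energy instead: testing the equation for $u\eta_t$ against $u\eta_t$ for cutoffs $\eta_t$ adapted to $\{\delta>t\}$, together with a local Poincar\'e inequality that uses $u|_{\partial\Omega}=0$, yields a recursion of the form $\int_{\{\delta>2t\}}|\nabla u|^2\le \varepsilon\int_{\{\delta>t\}}|\nabla u|^2+(\text{forcing in }t)$; iterating in $t$ and integrating $\int_0^{\text{diam}(\Omega)}(\,\cdot\,)\,dt$ again produces the factor $(|\ln\delta|+1)^2$.
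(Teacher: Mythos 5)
Your route (Green matrix of the domain plus a weighted Schur test) is genuinely different from the paper's, and as written it has two concrete gaps. First, the Schur test cannot work near the diagonal: the kernel $\nabla_x\nabla_y G(x,y)$ has size $|x-y|^{-d}$, so $\int_{|x-y|\le c\,\delta(x)}|\mathcal K(x,y)|\,dy$ diverges logarithmically at $y=x$; your claim that ``the near-diagonal part is $O(1)$'' is false, because a Schur test on $|\mathcal K|$ sees no cancellation of the principal value. The local part must instead be handled by the unweighted energy estimate on Whitney cubes (where $\delta$ and $\phi(\delta)$ are essentially constant), i.e.\ by a Whitney/Calder\'on--Zygmund decomposition rather than a Schur test. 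Second, the off-diagonal input you invoke, boundary decay of $\nabla_x\nabla_y G$ via ``boundary De Giorgi--Nash--Moser,'' is a scalar tool: for systems ($m>1$) in a Lipschitz domain no pointwise boundary H\"older estimate is available, and even the $L^2$-averaged decay of solutions vanishing on a boundary portion is not free --- it has to be extracted from the solvability theory (e.g.\ \eqref{max-square}), which you do not do. Your fallback energy iteration is also unconvincing: Caccioppoli gives a constant $C$, not a small $\varepsilon$, in the claimed recursion, so no smallness mechanism is identified, and the provenance of the exact power $2$ of the logarithm is left heuristic.

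For comparison, the paper sidesteps boundary estimates for the Green matrix altogether: after normalizing $\Omega\subset[-1/4,1/4]^d$ and periodizing $A$, it sets $w^\alpha(x)=-\int_\Omega \partial_{y_i}\Gamma^{\alpha\beta}(x,y) f_i^\beta(y)\,dy$ with $\Gamma$ the fundamental solution in $\mathbb{R}^d$, quotes from \cite[Section 8]{KLS2} the weighted bounds $\int_\Omega|\nabla w|^2\delta\,dx\le C\int_\Omega|f|^2\delta\,\phi(\delta)\,dx$ and $\int_{\partial\Omega}|w|^2\,d\sigma\le C\int_\Omega|f|^2\delta\,\phi(\delta)\,dx$, and then treats $u-w$, which is an $\mathcal L$-solution with boundary data $-w$, by the square-function estimate \eqref{max-square}. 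Thus the only boundary information needed is the $L^2$ Dirichlet solvability, exactly the ingredient your sketch is missing. Your approach could likely be repaired along these lines (local energy near the diagonal, averaged boundary decay deduced from \eqref{max-square} or from the free-space-plus-correction splitting), but as proposed it does not constitute a proof.
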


\begin{proof}
By dilation and translation we may assume that $\Omega\subset [-1/4, 1/4]^d$.
Without loss of generality we may further assume that $A(x)$ is periodic with respect to $\mathbb{Z}^d$.
This would allow us to use results in \cite{KLS2} obtained for second-order elliptic systems in divergence form 
with periodic coefficients.

Let $w=(w^\alpha)$, where
$$
w^\alpha (x)=-\int_\Omega \frac{\partial}{\partial y_i} \bigg\{ \Gamma^{\alpha\beta}
(x,y) \bigg\} f_i^\beta (y)\, dy,
$$
and $\big( \Gamma^{\alpha\beta} (x,y) \big)$ denotes the matrix of fundamental solutions
for $\mathcal{L}$ in $\mathbb{R}^d$, with pole at $y$.
It follows from \cite[section 8]{KLS2} that 
\begin{equation}\label{3.1-4}
\int_\Omega |\nabla w (x)|^2\, \delta (x)\, dx
\le C \int_\Omega |f(x)|^2\,  \delta (x)\,  \phi (\delta (x))\, dx,
\end{equation}
and 
\begin{equation}\label{3.1-6}
\int_{\partial\Omega} |w|^2\, d\sigma
\le C \int_\Omega |f(x)|^2\,  \delta (x) \, \phi (\delta (x))\, dx.
\end{equation}
Note that $\mathcal{L} (u-w)=0$ in $\Omega$ and $u-w=-w$ on $\partial\Omega$.
Using (\ref{max-square}), we obtain
$$
\int_{\Omega} |\nabla (u-w)|^2\, \delta (x)\, dx \le C \int_{\partial\Omega} |w|^2\, d\sigma.
$$
This, together with (\ref{3.1-6}), gives
\begin{equation}\label{3.1-8}
\int_\Omega |\nabla (u-w) |^2 \, \delta (x)\, dx
\le C \int_\Omega |f(x)|^2 \, \delta (x) \, \phi (\delta (x))\, dx.
\end{equation}
The desired estimate now follows from (\ref{3.1-4}) and (\ref{3.1-8}).
\end{proof}

\begin{remark}\label{remark-3.1}
{\rm
Since $\phi(t)\le C_\varepsilon t^{-\varepsilon}$ for $0<t<1$ and $\varep>0$, 
it follows from (\ref{3.1-2}) that
\begin{equation}\label{3.2-1}
\int_\Omega |\nabla u(x)|^2\, \delta (x)\, dx
\le C_\varep \int_\Omega |f(x)|^2\, \big[\delta (x) \big]^{1-\varep}\, dx.
\end{equation}
This, together with the energy estimate $\|\nabla u\|_{L^2(\Omega)}\le C \|f\|_{L^2(\Omega)}$,
gives
\begin{equation}\label{3.2-3}
\int_\Omega |\nabla u (x)|^2 \big[ \delta (x)\big]^{\alpha_2}\, dx
\le C\int_\Omega |f(x)|^2\big[\delta (x)\big]^{\alpha_1} \, dx,
\end{equation}
by complex interpolation, where $0\le \alpha_1<\alpha_2\le 1$.
Although the weighted norm inequality (\ref{3.2-3}) for $0<\alpha_1<\alpha_2<1$  is sufficient for our purpose,
it would be interesting to know if
(\ref{3.2-3}) holds for $0<\alpha_1=\alpha_2<1$.
}
\end{remark}

\begin{proof}[\bf Proof of Theorem \ref{theorem-3.1}]
Without loss of generality we may assume that
$\Omega\subset [-1/4, 1/4]^d$ and $A(x)$ is periodic with respect to $\mathbb{Z}^d$.
Suppose that $A$ is H\"older continuous of order $\eta$.
We construct a matrix $B(x)=\big( b_{ij}^{\alpha\beta} (x) \big)$, where $b_{ij}^{\alpha\beta}$
is the solution of the Dirichlet problem
$$
\Delta \big(b_{ij}^{\alpha\beta} \big) =0 \quad \text{ in } \Omega
\quad \text{ and } \quad b_{ij}^{\alpha\beta} =a_{ij}^{\alpha\beta} \quad \text{ on } \partial\Omega.
$$
It follows from the maximum principle that
 $B(x)$ satisfies the elliptic condition (\ref{ellipticity}) with the same $\mu$.
 Also note that $B\in C^\infty(\Omega)$, $B$ is H\"older continuous of order $\eta$
 in $\overline{\Omega}$,
\begin{equation}\label{3.3-1}
|\nabla B(x)|\le C \big[\delta(x)]^{\eta-1} \ \ \ \text{ for any } x\in \Omega,
\end{equation}
and 
\begin{equation}\label{3.3-3}
  |A(x)-B(x)|\le C \big[\delta (x)\big]^\eta \ \ \  \text{  for any  } x\in \Omega.
 \end{equation}
(here we have assumed that $0<\eta<\eta_0(\Omega)$ is sufficiently small).
In particular, the estimate (\ref{3.3-1}) implies that $|\nabla B(x)|^2\,  \delta (x)\, dx$
is a Carleson measure on $\Omega$.

Now, let $u\in H^1(\Omega;\mathbb{R}^m)$ be a weak solution of 
$\mathcal{L} (u)=0$ in $\Omega$ and $v\in H^1(\Omega; \mathbb{R}^{d\times m})$.
Write
\begin{equation}\label{3.3-5}
\int_\Omega \nabla u\cdot v\, dx
=\int_\Omega \nabla w \cdot v\, dx
+\int_\Omega \nabla (u-w)\cdot v\, dx,
\end{equation}
where $w\in H^1(\Omega; \mathbb{R}^m)$ is the solution of $\text{div} \big(B(x)\nabla w\big)=0$ in $\Omega$ and
$w=u$ on $\partial\Omega$.
By Theorem \ref{theorem-2.1} and (\ref{max-square}), the first term in the right hand side of 
(\ref{3.3-5}) is bounded by
$$
C \| u\|_{L^2(\partial\Omega)}
\left\{ \int_\Omega |\nabla v|^2\, \delta (x)\, dx +\int_{\partial\Omega} |(v)^*|^2\, d\sigma \right\}^{1/2}.
$$

The second term in the right hand side of (\ref{3.3-5}) will be handled by the estimate (\ref{3.2-3}).
By the Cauchy inequality we see that
\begin{equation}\label{3.3-7}
\aligned
\left|\int_\Omega \nabla (w-u)\cdot v\, dx \right|
& \le \left\{ \int_\Omega |\nabla (w-u)|^2 \, \big[\delta (x)\big]^{1-\eta}\, dx \right\}^{1/2}
\left\{ \int_\Omega |v|^2\, \big[\delta (x)\big]^{\eta-1}\, dx \right\}^{1/2}\\
&\le
C_\alpha \left\{ \int_\Omega |\nabla (w-u)|^2 \, \big[\delta (x)\big]^{1-\eta}\, dx \right\}^{1/2}
\| (v)^*\|_{L^2(\partial\Omega)},
\endaligned
\end{equation}
 where we have used the fact that $\big[\delta (x)\big]^{\eta-1} \, dx$ 
is a Carleson measure on $\Omega$ for the second inequality.
Note that $w-u\in H^1_0(\Omega; \mathbb{R}^m)$, and
$$
\text{div} \big(A\nabla (w-u)\big)
=\text{div} \big( A\nabla w\big)
=\text{div}\big( (A-B)\nabla w\big).
$$
It follows from the estimate  (\ref{3.2-3}) that
\begin{equation}\label{3.3-9}
\aligned
\int_\Omega
|\nabla (w-u)|^2 \big[\delta (x)\big]^{1-\eta}\, dx 
& \le C \int_\Omega |(A-B)\nabla w|^2 \big[\delta (x)\big]^{1-2\eta}\, dx\\
& \le C \int_\Omega |\nabla w|^2 \, \delta (x)\, dx\\
&\le C \int_{\partial\Omega} |u|^2\, d\sigma,
\endaligned
\end{equation}
where we have used (\ref{3.3-3}) for the second inequality and
(\ref{max-square}) for the third.
This, together with (\ref{3.3-7}), completes the proof.
\end{proof}

%%%%%%%%%%%%%%%%%%%%%%%%%%%%%%%%%%%%%%%%%%%%%

\section{Trilinear estimates and Proof of Theorem \ref{main-theorem-1}}

We begin with a lemma on extensions of Lipschitz functions.

\begin{lemma}\label{lemma-4.0}
Let $\Omega$ be a bounded Lipschitz domain.
Let $g\in C^{0,1} (\partial\Omega)$ be a scalar Lipschitz function on $\partial\Omega$.
Then there exists $v\in C(\overline{\Omega})\cap C^\infty(\Omega)$ such that
$v=g$ on $\partial\Omega$, $\|\nabla v\|_{L^\infty(\Omega)} \le C \|g\|_{C^{0,1}(\partial\Omega)}$,
and $d\nu=|\nabla^2 v (x)|^2\, \delta (x)\, dx$ is a Carleson measure on $\Omega$
with norm $\|\nu\|_{\mathcal{C}}\le C\|g\|_{C^{0,1}(\partial\Omega)}$,
where $C$ depends only on $\Omega$.
\end{lemma}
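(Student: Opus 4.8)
The plan is to build the extension $v$ by the standard Whitney/Stein-type construction, mollifying the boundary data against a dyadically-scaled bump, and then to verify the two required bounds (the $L^\infty$ bound on $\nabla v$ and the Carleson bound on $|\nabla^2 v|^2\,\delta\,dx$) by direct estimates. First I would localize: by a partition of unity subordinate to a covering of $\partial\Omega$ by coordinate cylinders, it suffices to treat the model situation $D=\{(x',t):t>\psi(x')\}$ with $\psi$ Lipschitz and $\psi(0)=0$, and to construct $v$ in a neighborhood of $0$ in $D$; the global $v$ is obtained by summing the localized pieces against the partition of unity (one checks that the cut-off functions contribute harmlessly to both norms since they are smooth and fixed). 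On $D$ the natural choice is a Stein-type extension: with $\delta(x',t)\approx t-\psi(x')=:\rho$, set
\begin{equation}\label{ext-def}
v(x',t)=\int_{\mathbb{R}^{d-1}} g\big(y',\psi(y')\big)\,\eta_\rho(x'-y')\,dy',
\end{equation}
where $\eta_\rho(z)=\rho^{1-d}\eta(z/\rho)$, $\eta\in C_c^\infty(\mathbb{R}^{d-1})$, $\int\eta=1$. One can equivalently use the harmonic extension, but the explicit mollification makes the derivative estimates transparent.

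The key steps, in order, are: (1) verify $v$ extends continuously to $\overline D$ with boundary value $g$ — this follows from $\int\eta=1$ and the continuity of $g\circ(\mathrm{id},\psi)$, together with $\rho\to0$ as one approaches the boundary; (2) bound $\nabla v$. Writing $\tilde g(y')=g(y',\psi(y'))$, which is Lipschitz on $\mathbb{R}^{d-1}$ with $\|\nabla\tilde g\|_\infty\le C\|g\|_{C^{0,1}(\partial\Omega)}$, one differentiates \eqref{ext-def}. A derivative falling on $\eta_\rho$ in the $x'$ variables produces a kernel of the form $\rho^{-d}(\nabla\eta)(\cdot/\rho)$; since $\int\nabla\eta=0$ one may insert $\tilde g(y')-\tilde g(x')$ and gain a factor $|x'-y'|\lesssim\rho$, giving $|\nabla_{x'}v|\le C\|\nabla\tilde g\|_\infty$. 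The $t$-derivative acts through $\rho$; again using $\int\eta=1$ to subtract $\tilde g(x')$ one obtains $|\partial_t v|\le C\|\nabla\tilde g\|_\infty$. Hence $\|\nabla v\|_{L^\infty}\le C\|g\|_{C^{0,1}(\partial\Omega)}$. (3) Bound $\nabla^2 v$: each additional derivative either lands on $\eta_\rho$, costing an extra $\rho^{-1}$, or on $\rho$, with the same effect; after subtracting $\tilde g(x')$ (and, where a second cancellation is needed, a first-order Taylor term, legitimate since $\tilde g\in C^{0,1}$ and the moments of $\nabla\eta,\nabla^2\eta$ vanish appropriately) one is left with the pointwise bound
\begin{equation}\label{hess-bd}
|\nabla^2 v(x',t)|\le \frac{C}{\rho}\,\|\nabla\tilde g\|_\infty\le \frac{C}{\delta(x',t)}\,\|g\|_{C^{0,1}(\partial\Omega)}.
\end{equation}
(4) Deduce the Carleson bound: for $Q\in\partial\Omega$ and $0<r<\mathrm{diam}(\Omega)$, using \eqref{hess-bd} and $\delta(x)\approx t-\psi(x')$,
\begin{equation}\label{carleson-comp}
\int_{B(Q,r)\cap\Omega}|\nabla^2 v|^2\,\delta(x)\,dx
\le C\|g\|^2_{C^{0,1}(\partial\Omega)}\int_{B(Q,r)\cap\Omega}\frac{dx}{\delta(x)}
\le C\|g\|^2_{C^{0,1}(\partial\Omega)}\,r^{d-1},
\end{equation}
the last inequality because $\int_0^{r}\rho^{-1}\,d\rho$ diverges only logarithmically — so instead one splits $\delta(x)$ between the weight and one copy of the Hessian bound to get $\int |\nabla^2 v|^2\delta\le C\|g\|^2\int \rho^{-1}\,dx' d\rho$, which over a box of side $\approx r$ in the $(d-1)$ tangential variables and $\rho\in(0,r)$ gives $\approx r^{d-1}\int_0^r\rho^{-1}d\rho$. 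That is \emph{not} finite, so the honest computation must use the genuine extra cancellation: near a point at height $\rho$ the data $g$ is only felt through its oscillation on a ball of radius $\rho$, and $|\nabla^2 v|\le C\rho^{-1}\,\mathrm{osc}_{B(x',C\rho)}\tilde g / \rho$ is too crude. The clean route is to observe that $\partial_t v$ is itself an extension with the \emph{same} structure applied to a bounded function ($|\partial_t v|\le C\|g\|_{C^{0,1}}$), so $\rho\,\nabla(\partial_t v)$ is bounded and, more to the point, the standard fact that if $h$ is bounded and $\nabla h$ satisfies $|\nabla h(x',\rho)|\le C\rho^{-1}\|h\|_\infty$ with the additional square-Dini decay coming from $g$ being Lipschitz, then $|\nabla^2 v|^2\delta\,dx$ is Carleson with the stated norm; this is exactly the mechanism in \cite{Dahlberg-1986, DKPV} and I would cite it. Concretely: decompose $\tilde g$ via a Littlewood–Paley / telescoping sum over dyadic scales of $\rho$; at scale $2^{-j}$ the contribution to $\nabla^2 v$ is $\lesssim 2^{j}\cdot\|\Delta_j\tilde g\|_\infty$ where $\Delta_j$ picks out frequencies $\sim 2^j$, and $\sum_j (2^j\|\Delta_j\tilde g\|_\infty)^2 2^{-j}\cdot 2^{-j(d-1)}\cdot(\text{number of cubes})\lesssim\|\nabla\tilde g\|_\infty^2 r^{d-1}$ by orthogonality and the Lipschitz bound $\|\Delta_j\tilde g\|_\infty\lesssim 2^{-j}\|\nabla\tilde g\|_\infty$.

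The main obstacle, then, is precisely this last step: the naive pointwise Hessian bound $|\nabla^2 v|\lesssim\rho^{-1}$ loses by exactly a logarithm, and one must exploit that $g$ is Lipschitz (not merely bounded) — i.e. that the construction \eqref{ext-def} reproduces first-order Taylor polynomials — to extract the extra cancellation that turns $\int_0^r\rho^{-1}$ into something $O(1)$ after the correct pairing of the weight $\delta$ with the derivatives. Once the model-domain Carleson estimate \eqref{carleson-comp} is established with the correct argument, reassembling via the partition of unity and transferring through the bi-Lipschitz flattening map (which, as recalled in the proof of Theorem \ref{theorem-2.1}, preserves Carleson measures of the form $|\nabla^2\Phi|^2\delta\,dx$ and hence of $|\nabla^2 v|^2\delta\,dx$) is routine, and the smoothness $v\in C^\infty(\Omega)$ is immediate from differentiating under the integral in \eqref{ext-def}.
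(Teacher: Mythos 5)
Your construction is, in outline, the same as the paper's: localize by a partition of unity, pass to a graph domain, extend $g$ by mollification at a scale comparable to the distance to the boundary, and multiply by a cutoff (the paper does this by setting $F(x',t)=\eta_t*f(x')$ in the flattened half-space and taking $v=\varphi\, F\circ\Phi^{-1}$, with $\Phi$ the Kenig--Stein map). The continuity up to the boundary, the boundary values, and the bound $\|\nabla v\|_{L^\infty(\Omega)}\le C\|g\|_{C^{0,1}(\partial\Omega)}$ are argued correctly and as in the paper.

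The step that carries all the content, namely that $|\nabla^2 v|^2\,\delta(x)\,dx$ is a Carleson measure, is however not established by what you wrote. You rightly observe that the pointwise bound $|\nabla^2 v|\lesssim \rho^{-1}\|\nabla\tilde g\|_\infty$ loses exactly a logarithm (your displayed Carleson computation is asserted and then retracted), but the dyadic argument you substitute does not repair it: at height $\rho\sim 2^{-j}$ the frequency-$2^{k}$ piece of $\tilde g$ contributes about $2^{2k}\|\Delta_k\tilde g\|_\infty$ to $\nabla^2 v$ for $2^{k}\le 2^{j}$ (each derivative costs $2^{k}$), not $2^{j}\|\Delta_j\tilde g\|_\infty$; inserting $\|\Delta_k\tilde g\|_\infty\lesssim 2^{-k}\|\nabla\tilde g\|_\infty$ and summing over $k\le j$ by the triangle inequality returns the same $2^{j}\|\nabla\tilde g\|_\infty$, so every Whitney slab of a Carleson box contributes $\approx r^{d-1}\|\nabla\tilde g\|_\infty^2$ and the sum over slabs still diverges logarithmically. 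Sup-norm bookkeeping cannot beat the logarithm; the gain is an $L^2$ (almost-orthogonality) phenomenon. The clean way to close the step is to note that every second derivative of $W(x',\rho)=\eta_\rho*\tilde g(x')$ -- including the $\rho$-derivatives, after writing the resulting mean-zero kernels as divergences so as to move one derivative onto $\tilde g$ -- has the form $\rho^{-1}\,\theta_\rho*\partial_{x'}\tilde g$ with $\theta\in C_c^\infty$, $\int\theta=0$; hence $|\nabla^2 W|^2\,\rho\,dx'd\rho=|\theta_\rho*b|^2\,\rho^{-1}\,dx'd\rho$ with $b=\nabla_{x'}\tilde g\in L^\infty\subset BMO$, and the Carleson bound with norm $\lesssim\|b\|_\infty^2$ is the standard Fefferman--Stein embedding (local part of $b$ by Plancherel and the square-function bound, far part by the mean-zero and decay of $\theta$). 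Either prove this or cite it cleanly; your sketch does neither. A second, smaller, defect: defining $v$ directly with $\rho=t-\psi(x')$ gives a function that is not even $C^2$ in $D$, since two derivatives fall on the Lipschitz function $\psi$; any smooth substitute for $\rho$ must itself satisfy a Hessian--Carleson bound, or the chain-rule terms reintroduce the same logarithm. This is precisely why the paper extends upstairs in $\mathbb{R}^d_+$ and composes with $\Phi^{-1}$, using that $|\nabla^2\Phi|^2\,s\,dy'ds$ is a Carleson measure; invoking the flattening only as an afterthought leaves this unaddressed.
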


\begin{proof}
By a partition of unity we may assume that supp$(g)\subset B(Q, r_0)\cap\partial\Omega$
for some $Q\in \partial\Omega$ and some small $r_0>0$.
By translation and rotation we may assume that $Q=0$ and
$$
B(0, C_0r_0)\cap\Omega
=B(0, C_0r_0)\cap \big\{ (x^\prime, t)\in \mathbb{R}^d:\ 
x^\prime\in \mathbb{R}^{d-1} \text{ and } t>\psi(x^\prime)\big\},
$$
where $\psi$ is a Lipschitz function on $\mathbb{R}^{d-1}$, $\psi(0)=0$, and $C_0=10 (1+\|\nabla\psi\|_\infty)$.
Let $D=\big\{ (x^\prime, t)\in \mathbb{R}^d:
x^\prime\in \mathbb{R}^{d-1} \text{ and } t>\psi(x^\prime)\big\}$.
Recall that there exists a bi-Lipschitz map $\Phi: \mathbb{R}^{d}_+\to D$,
such that $c\le |\nabla \Phi(x^\prime, t)|\le C$, and
$|\nabla^2 \Phi(x^\prime,t)|^2\, t \, dx^\prime dt$ is a Carleson measure on $\mathbb{R}^d_+$.

Now, let $f(x^\prime)=g(\Phi^{-1}(x^\prime, 0))$.
Let $F$ be an extension of $f$ to $\mathbb{R}^d_+$ so that $|F|\le \| f\|_\infty$,
 $|\nabla F|\le C \|\nabla_{x^\prime} f\|_\infty$,
and $|\nabla^2 F(x^\prime,t)|^2 \, t\, dx^\prime dt$ is a Carleson measure with norm
less than $C\|\nabla_{x^\prime} f\|^2_\infty$.
It is not hard to verify that $v(x^\prime, t)=\varphi (x^\prime, t) F(\Phi^{-1}(x^\prime, t))$
satisfies all requirements in the lemma, if $\varphi\in C_0^\infty(B(0,C_0r_0))$
and $\varphi=1$ on $B(0, r_0)$.

Finally we remark that the extension $F$ may be given by $F(x^\prime, t)
=\eta_t* f (x^\prime)$, where $\eta_t (x^\prime)=t^{1-d} \eta(x^\prime/t)$, $\eta\in C_0^\infty(\mathbb{R}^{d-1})$,
and $\int_{\mathbb{R}^{d-1}} \eta =1$.
\end{proof}

Let $f=(f^\alpha)$ be an $\mathbb{R}^m$-valued  Lipschitz function on $\partial\Omega$
and $g$ a scalar Lipschitz function on $\partial\Omega$.
Let $u=(u^\alpha)\in H^1(\Omega; \mathbb{R}^m)$ be the weak solution of the Dirichlet problem,
\begin{equation}\label{4.0-1}
\mathcal{L}(u) =0 \quad \text{ in } \Omega \quad \text{ and } \quad
u=f \quad \text{ on } \partial \Omega.
\end{equation}
We will  use $v$ to denote the extension of $g$ to $\Omega$ given by Lemma \ref{lemma-4.0}.
Let $h\in H^1(\Omega; \mathbb{R}^m)$ be a weak solution of $\mathcal{L}^* (h)
=\text{div} \big(A^*(x)\nabla h\big)=0$ in $\Omega$.
The proof of Theorem \ref{main-theorem-1} relies on the following observation:
\begin{equation}\label{4.0-4}
\aligned
& \int_{\partial\Omega} \big\{ \Lambda (gf)- g \Lambda (f)\big\} \cdot h\, d\sigma
=\int_{\partial\Omega} gf \cdot \Lambda^* (h)\, d\sigma
-\int_{\partial\Omega} g \Lambda (f) \cdot h\, d\sigma\\
&=\int_{\partial\Omega} g\cdot  u^\alpha \cdot n_i a_{ji}^{\beta\alpha} \frac{\partial h^\beta}{\partial x_j}\, d\sigma
-\int_{\partial\Omega}
g \cdot n_i a_{ij}^{\alpha\beta} \frac{\partial u^\beta}{\partial x_j} \cdot h^\alpha\, d\sigma\\
&=\int_\Omega \frac{\partial v}{\partial x_i} \cdot u^\alpha \cdot a_{ji}^{\beta\alpha} \frac{\partial h^\beta}{\partial x_j}\, dx
-\int_\Omega \frac{\partial v}{\partial x_i} \cdot a_{ij}^{\alpha\beta} \frac{\partial u^\beta}{\partial x_j} \cdot h^\alpha\, dx,
\endaligned
\end{equation}
where we have used $\mathcal{L}(u)=0$ and $\mathcal{L}^* (h)=0$ in $\Omega$.

\begin{lemma}\label{lemma-4.1}
Let $\Omega$ be a bounded Lipschitz domain.
Assume that $A$ satisfies (\ref{ellipticity}) and that $d\nu=|\nabla A(x)|^2\, \delta(x)\, dx$ is a Carleson measure
on $\Omega$ with norm less  than $C_0$.
Let $f,g,h$, and $u, v$ be given above. Then
\begin{equation}\label{4.1-0}
\aligned
& \left|\int_{\partial\Omega} \big\{ \Lambda (gf)- g \Lambda (f)\big\} \cdot h\, d\sigma\right|\\
& \le C \|g\|_{C^{0,1}(\partial\Omega)}
\left\{ \int_\Omega |\nabla u|^2\, \delta (x)\, dx 
+\int_{\partial\Omega} |(u)^*|^2\, d\sigma\right\}^{1/2}\\
&\qquad\qquad \qquad \qquad\cdot \left\{ \int_\Omega |\nabla h|^2\, \delta (x)\, dx 
+\int_{\partial\Omega} |(h)^*|^2\, d\sigma\right\}^{1/2},
\endaligned
\end{equation}
where $C$ depends only on $d$, $m$, $\mu$, $C_0$, and $\Omega$.
\end{lemma}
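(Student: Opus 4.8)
\textbf{Proof strategy for Lemma \ref{lemma-4.1}.}
The plan is to start from the identity \eqref{4.0-4}, which expresses the pairing $\int_{\partial\Omega}\{\Lambda(gf)-g\Lambda(f)\}\cdot h\,d\sigma$ as a difference of two interior trilinear integrals,
\[
I_1=\int_\Omega \frac{\partial v}{\partial x_i}\cdot u^\alpha\cdot a_{ji}^{\beta\alpha}\frac{\partial h^\beta}{\partial x_j}\,dx,
\qquad
I_2=\int_\Omega \frac{\partial v}{\partial x_i}\cdot a_{ij}^{\alpha\beta}\frac{\partial u^\beta}{\partial x_j}\cdot h^\alpha\,dx .
\]
I would bound $I_1$ and $I_2$ separately. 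The term $I_2$ is the easier of the two: since $\|\nabla v\|_{L^\infty(\Omega)}\le C\|g\|_{C^{0,1}(\partial\Omega)}$ by Lemma \ref{lemma-4.0}, one has pointwise $|\nabla v\cdot A\nabla u\cdot h|\le C\|g\|_{C^{0,1}}|\nabla u||h|$, and then I would apply Dahlberg's bilinear estimate (Theorem \ref{theorem-2.1}, in the form \eqref{2.1-2}) with the roles of ``solution'' and ``test function'' arranged so that $u$ (which solves $\mathcal{L}(u)=0$) plays the solution role and $w := (\nabla v)\,h$ plays the role of $v$ there. One checks $|\nabla w|\le C\|g\|_{C^{0,1}}(|\nabla^2 v||h|+|\nabla v||\nabla h|)$ and $(w)^*\le C\|g\|_{C^{0,1}}(h)^*$; the Carleson property of $|\nabla^2 v|^2\delta\,dx$ (again from Lemma \ref{lemma-4.0}) together with \eqref{Carleson-property} controls $\int_\Omega|\nabla^2 v|^2|h|^2\delta\,dx\le C\|g\|_{C^{0,1}}^2\int_{\partial\Omega}|(h)^*|^2\,d\sigma$, so the square-function/nontangential terms in \eqref{2.1-2} collapse to exactly the right-hand side of \eqref{4.1-0}.

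For $I_1$, the difficulty is that the factor $u^\alpha$ appears undifferentiated while $\nabla h$ is differentiated, so $u$ no longer sits in the ``solution'' slot in an obvious way. Here I would use the symmetry of the coefficient structure: $a_{ji}^{\beta\alpha}\frac{\partial h^\beta}{\partial x_j}$ is (a component of) the conormal-type expression $A^*\nabla h$, and $h$ solves $\mathcal{L}^*(h)=-\operatorname{div}(A^*\nabla h)=0$. So I would apply Theorem \ref{theorem-2.1} to the operator $\mathcal{L}^*$ (which satisfies the same ellipticity and Carleson hypotheses, since $|\nabla A^*|=|\nabla A|$), with $h$ in the solution slot and the test field taken to be $\widetilde w := (\nabla v)\,u$ (appropriately indexed to match $A^*\nabla h$ against it). Then $|\nabla\widetilde w|\le C\|g\|_{C^{0,1}}(|\nabla^2 v||u|+|\nabla v||\nabla u|)$ and $(\widetilde w)^*\le C\|g\|_{C^{0,1}}(u)^*$, and the same Carleson argument as for $I_2$ — now with $u$ in place of $h$ — converts $\int_\Omega|\nabla^2 v|^2|u|^2\delta\,dx$ into $C\|g\|_{C^{0,1}}^2\int_{\partial\Omega}|(u)^*|^2\,d\sigma$. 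This yields $|I_1|\le C\|g\|_{C^{0,1}}\{\int_\Omega|\nabla h|^2\delta\,dx+\int_{\partial\Omega}|(h)^*|^2\,d\sigma\}^{1/2}\{\int_\Omega|\nabla u|^2\delta\,dx+\int_{\partial\Omega}|(u)^*|^2\,d\sigma\}^{1/2}$, which is again the right-hand side of \eqref{4.1-0}.

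\textbf{Main obstacle.} The crux is the treatment of $I_1$: one must recognize that the ``wrong'' placement of derivatives is repaired precisely by the hypothesis $A^*=A$ (or more generally by applying the bilinear estimate to $\mathcal{L}^*$), so that $h$ can be put into the solution role against the test field $(\nabla v)u$. The rest — the pointwise product estimates for $\nabla\big((\nabla v)\,u\big)$ and $\nabla\big((\nabla v)\,h\big)$, and the invocation of \eqref{Carleson-property} to absorb the $|\nabla^2 v|^2\delta\,dx$ terms against the nontangential maximal functions — is routine bookkeeping once the correct pairing is set up. A minor point to be careful about is that Theorem \ref{theorem-2.1} as stated requires $A\in C^1(\Omega)$ with $|\nabla A|^2\delta\,dx$ Carleson; under the hypotheses of Lemma \ref{lemma-4.1} this is assumed directly, so no perturbation (Section 3) is needed here, and the constant depends only on $d,m,\mu,C_0,\Omega$ as claimed.
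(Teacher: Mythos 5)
Your proposal follows essentially the same route as the paper: start from the identity (\ref{4.0-4}), bound each trilinear term by applying Theorem \ref{theorem-2.1} with $h$ (a solution of $\mathcal{L}^*(h)=0$, for which the ellipticity and Carleson hypotheses hold equally well) in the solution slot for the first term and $u$ in the solution slot for the second, with test fields built from $\nabla v$ times the undifferentiated factor, and then absorb the resulting square-function and nontangential pieces using the properties of $v$ from Lemma \ref{lemma-4.0} together with (\ref{Carleson-property}). One bookkeeping point needs repair, though: Theorem \ref{theorem-2.1} controls $\int_\Omega \nabla h\cdot F\,dx$ (respectively $\int_\Omega\nabla u\cdot G\,dx$), not a pairing of $A^*\nabla h$ against a coefficient-free field, so the coefficients must be placed inside the test field, e.g. $F_j^\beta=a_{ji}^{\beta\alpha}\,\partial_i v\,u^\alpha$ and $G_j^\beta=a_{ij}^{\alpha\beta}\,\partial_i v\,h^\alpha$. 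Consequently your pointwise bounds on the gradients of the test fields are missing a term: $|\nabla F|\le C\{|\nabla^2 v|\,|u|+|\nabla v|\,|\nabla u|+|\nabla v|\,|u|\,|\nabla A|\}$, and similarly for $G$ with $h$ in place of $u$. The extra contribution $\int_\Omega |\nabla v|^2|u|^2|\nabla A|^2\,\delta(x)\,dx$ is then handled exactly like the $|\nabla^2 v|^2$ term, via (\ref{Carleson-property}) and the hypothesis that $|\nabla A(x)|^2\,\delta(x)\,dx$ is a Carleson measure with norm at most $C_0$ --- indeed this is precisely why that hypothesis appears in the lemma. With that correction your argument coincides with the paper's proof; also note that no symmetry $A^*=A$ is needed here, only that $h$ solves the adjoint equation, as you indicate parenthetically.
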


\begin{proof}
The lemma follows  from (\ref{4.0-4}) and Theorem \ref{theorem-2.1}.
Indeed, since $\text{div} (A^*\nabla h)=0$ in $\Omega$,
by Theorem \ref{theorem-2.1}, the first term in the right hand side of (\ref{4.0-4}) is bounded
by
\begin{equation}\label{4.1-1}
C\left\{\int_\Omega |\nabla h|^2\, \delta (x)\,  dx\right\}^{1/2}
\left\{ \int_\Omega |\nabla F(x)|^2\, \delta (x) \, dx
+\int_{\partial\Omega} |(F)^*|^2\, d\sigma \right\}^{1/2}
\end{equation}
where $F=\big( \frac{\partial v}{\partial x_i} \cdot u^\alpha\cdot a_{ji}^{\beta\alpha}\big)$.
Note that $|F|\le C |\nabla v|\, |u|$ and
\begin{equation}\label{4.1-2}
|\nabla F|\le C \big\{ |\nabla^2 v| \, |u| +|\nabla v|\, |\nabla u| + |\nabla v|\,  |u|\,  |\nabla A|\big\}.
\end{equation}
It follows that
\begin{equation}\label{4.1-3}
\int_{\partial\Omega} |(F)^*|^2\, d\sigma
\le C \|g\|^2_{C^{0,1}(\partial\Omega)} \int_{\partial\Omega} |(u)^*|^2\, d\sigma,
\end{equation}
where we have used the estimate $\|\nabla v\|_{L^\infty(\Omega)} \le \|  g\|_{C^{0,1}(\partial\Omega)}$
in Lemma \ref{lemma-4.0}.
In view of (\ref{4.1-2}) we obtain
\begin{equation}\label{4.1-5}
\aligned
\int_{\partial\Omega}
|\nabla F|^2\, \delta (x)\, dx
& \le C\|g \|^2_{C^{0,1}(\partial\Omega)} \int_\Omega |\nabla u|^2\,  \delta (x)\, dx\\
& +C \int_\Omega |u|^2 |\nabla^2 v|^2 \, \delta (x)\, dx
+C \|g\|^2 _{C^{0,1}(\partial\Omega)} \int_\Omega |u|^2 |\nabla A|^2\, \delta(x)\, dx\\
&\le C \| g\|^2_{C^{0,1}(\partial\Omega)}
\left\{ \int_\Omega |\nabla u|^2\,  \delta (x)\, dx
+\int_{\partial\Omega} |(u)^*|^2\, d\sigma \right\},
\endaligned
\end{equation}
where we have used the fact that $|\nabla^2 v (x)|^2\,  \delta (x)\, dx$ is a Carleson measure on $\Omega$
with norm less than $C \|g \|^2_{C^{0,1}(\partial\Omega)}$ in Lemma \ref{lemma-4.0},
as well as the assumption that $|\nabla A(x)|^2 \, \delta (x)\, dx$ is a Carleson measure on $\Omega$.
This, together with (\ref{4.1-1}) and (\ref{4.1-3}), shows that the first term in the right hand side of
(\ref{4.0-4}) is dominated by the right hand side of (\ref{4.1-0}).

The second term in the right hand side of (\ref{4.0-4}) may be handled in the same fashion.
Since $\text{div}(A\nabla u)=0$ in $\Omega$, by Theorem \ref{theorem-2.1},  this term is bounded by
\begin{equation}\label{4.1-7}
C\left\{\int_\Omega |\nabla u(x) |^2\, \delta (x) dx\right\}^{1/2}
\left\{ \int_\Omega |\nabla G(x)|^2\, \delta (x) \, dx
+\int_{\partial\Omega} |(G)^*|^2\, d\sigma \right\}^{1/2},
\end{equation}
where $G=\big( \frac{\partial v}{\partial x_i} \cdot h^\alpha \cdot a_{ij}^{\alpha\beta} \big)$.
Note that $|G|\le C |\nabla v|\, |h|$ and
$$
|\nabla G|\le C \left\{ |\nabla^2v |\, |h| +|\nabla v|\, |\nabla h| +|\nabla v|\, |h| \, |\nabla A| \right\}.
$$
The rest of the argument is identical to that for the first term (with the roles of $u$ and $h$
switched). We omit the details.
\end{proof}

\begin{proof}[\bf Proof of Theorem \ref{main-theorem-1}]

Let $f=(f^\alpha)\in C^{0,1}(\partial\Omega; \mathbb{R}^m)$ and $g\in C^{0,1}(\partial\Omega)$.
Let $h\in H^1(\Omega;\mathbb{R}^m)$ be a weak solution of $\mathcal{L}(h)=0$ in $\Omega$.
It follows from (\ref{4.0-4}) that
\begin{equation}\label{4.2-1}
\aligned
&\left| \int_{\partial\Omega} \big\{ \Lambda (gf)- g\Lambda (f)\big\} \cdot h\, d\sigma \right|\\
&\ \ \ \ \le
\left|\int_\Omega \frac{\partial v}{\partial x_i} \cdot u^\alpha \cdot 
a_{ji}^{\beta\alpha} \frac{\partial h^\beta}{\partial x_j}\, dx\right|
+\left|\int_\Omega \frac{\partial v}{\partial x_i} \cdot a_{ij}^{\alpha\beta} 
\frac{\partial u^\beta}{\partial x_j} \cdot h^\alpha\, dx\right| =I_1 +I_2,
\endaligned
\end{equation}
where $v$ is the extension of $g$ given by Lemma \ref{lemma-4.0}.
We will show that
\begin{equation}\label{4.2-2}
I_1 +I_2 \le C \| g\|_{C^{0,1}(\partial\Omega)} \| f\|_{L^2(\partial\Omega)} \| h\|_{L^2(\partial\Omega)},
\end{equation}
which yields the desired estimate  by duality.

To estimate $I_1$, we construct an elliptic matrix $B(x)=(b_{ij}^{\alpha\beta})$,
as in the proof of Theorem \ref{theorem-3.1}. Note that 
\begin{equation}\label{4.2-3}
I_1
\le \left|\int_\Omega \frac{\partial v}{\partial x_i} \cdot u^\alpha \cdot 
b_{ji}^{\beta\alpha} \frac{\partial h^\beta}{\partial x_j}\, dx\right|
+
\left|\int_\Omega \frac{\partial v}{\partial x_i} \cdot u^\alpha \cdot 
\big( a_{ji}^{\beta\alpha} -b_{ij}^{\alpha\beta}\big)
\frac{\partial h^\beta}{\partial x_j}\, dx\right|=I_{11} +I_{12}.
\end{equation}
Since $|\nabla^2 B(x)|^2\, \delta (x)\, dx$ is a Carleson measure on $\Omega$,
we may use Theorem \ref{theorem-3.1} to show that
\begin{equation}\label{4.2-5}
I_{11}
\le C \| h\|_{L^2(\partial\Omega)} \| g\|_{C^{0,1}(\partial\Omega)}
\left\{ \int_{\Omega} |\nabla u|^2\, \delta (x)\, dx +\int_{\partial\Omega} |(u)^*|^2\, d\sigma \right\}^{1/2}
\end{equation}
(see the proof of Lemma \ref{lemma-4.1}). This, together with (\ref{max-square}), gives
\begin{equation}
|I_{11}|\le C \| h\|_{L^2(\partial\Omega)} \| g\|_{C^{0,1}(\partial\Omega)} \| f\|_{L^2(\partial\Omega)}.
\end{equation}

To estimate $I_{12}$, we observe that
$$
\aligned
I_{12}
&\le 
\int_\Omega |\nabla v|\, |u|\, |A-B|\, |\nabla h|\, dx \\
& \le C \| g\|_{C^{0,1}(\partial\Omega)}
\left\{ \int_\Omega |\nabla h|^2\, \delta (x)\, dx \right\}^{1/2}
\left\{\int_\Omega |u|^2 |A-B|^2 \, \big[\delta(x)\big]^{-1}\, dx\right\}^{1/2}\\
&\le C \| g\|_{C^{0,1}(\partial\Omega)} \| h\|_{L^2(\partial\Omega)}
\left\{\int_\Omega |u|^2 |A-B|^2 \, \big[\delta(x)\big]^{-1}\, dx\right\}^{1/2},
\endaligned
$$
where we have used the Cauchy inequality for the second inequality and (\ref{max-square}) for the third.
Since $|A(x)-B(x)|\le C \big[\delta(x)\big]^\eta$ for some $\eta>0$,
$|A-B|^2 \big[\delta (x)\big]^{-1}\, dx$ is a Carleson measure on $\Omega$.
It follows that
$$
|I_{12}|\le C \| g\|_{C^{0,1}(\partial\Omega)} \| h\|_{L^2(\partial\Omega)} \| f\|_{L^2(\partial\Omega)}.
$$

Finally, we point out that the term $I_2$ may be handled by the same manner as $I_1$, with the roles
of $u$ and $h$ switched. We omit the details.
\end{proof}

%%%%%%%%%%%%%%%%%%%%%%%%%%%%%%%%%%%%%%%%%%%%%%%%%%%

\section{Proof of Theorem \ref{main-theorem-2}}

Throughout this section we will assume that $A$ is elliptic, symmetric, and H\"older continuous.
Let $f\in C^{0, 1}(\partial\Omega; \mathbb{R}^m)$ and $u\in H^1(\Omega; \mathbb{R}^m)$ be
the weak solution  of $\mathcal{L}(u)=0$ in $\Omega$ with boundary data $u=f$ on $\partial\Omega$.
Let $h\in H^1(\Omega;\mathbb{R}^m)$ be a weak solution of $\mathcal{L} (h)=0$ in $\Omega$.
In this section we shall use $v$ to denote the harmonic extension of $g$ to $\Omega$; i.e.,
$\Delta v =0$ in $\Omega$ and $v=g$ on $\partial\Omega$.
It was proved in \cite{Dahlberg-1980, Jerison-Kenig-1980} that
\begin{equation}\label{5.0-0}
\int_{\partial\Omega} |(\nabla v)^*|^2\, d\sigma
+\int_\Omega |\nabla^2  v|^2\, \delta (x)\, dx 
\le C \, \| g\|_{H^1(\partial\Omega)}^2,
\end{equation}
where $C$ depends only on the Lipschitz character of $\Omega$. 
Our goal is to show that
\begin{equation}\label{5.0-1}
\left| \int_{\partial\Omega}
\big\{ \Lambda (gf) -g \Lambda (f) \big\}\cdot h\, d\sigma\right|
\le C\,  \| g\|_{H^1(\partial\Omega)}
\| u\|_{L^\infty(\Omega)} \| h\|_{L^2(\partial\Omega)},
\end{equation}
which yields the desired estimate in Theorem \ref{main-theorem-2} by duality.
We shall assume that $\|u\|_{L^\infty(\Omega)}$ is finite.
Using the localized square function estimate, one may show that $|\nabla u(x)|^2\, \delta (x)\, dx$ is a Carleson
measure on $\Omega$ with norm less than $C\|u\|_{L^\infty(\Omega)}^2$, where $C$ depends only
on $A$ and $\Omega$.

To prove (\ref{5.0-1}) we begin by recalling from  (\ref{4.0-4}) that
\begin{equation}\label{5.0-3}
\aligned
 \int_{\partial\Omega} \big\{ \Lambda (gf)- g \Lambda (f)\big\} \cdot h\, d\sigma
&=\int_\Omega \frac{\partial v}{\partial x_i} \cdot u^\alpha \cdot a_{ji}^{\beta\alpha} \frac{\partial h^\beta}{\partial x_j}\, dx
-\int_\Omega \frac{\partial v}{\partial x_i} \cdot a_{ij}^{\alpha\beta} \frac{\partial u^\beta}{\partial x_j} \cdot h^\alpha\, dx\\
&=I - J.
\endaligned
\end{equation}
To estimate $I$, we write
\begin{equation}\label{5.0-5}
\aligned
I &= \int_\Omega \frac{\partial v}{\partial x_i} \cdot u^\alpha \cdot b_{ji}^{\beta\alpha} \frac{\partial h^\beta}{\partial x_j}\, dx
+ \int_\Omega \frac{\partial v}{\partial x_i} \cdot u^\alpha \cdot \big( a_{ji}^{\beta\alpha}
-b_{ji}^{\beta\alpha} \big) \frac{\partial h^\beta}{\partial x_j}\, dx\\
&= I_1 +I_2,
\endaligned
\end{equation}
where $B=(b_{ij}^{\alpha\beta})$ is the elliptic matrix constructed in the proof of Theorem \ref{theorem-3.1}.
It follows from Theorem \ref{theorem-3.1} that
\begin{equation}\label{5.0-7}
|I_1|\le C \| h\|_{L^2(\partial\Omega)}
\left\{\int_{\Omega} |\nabla F|^2\, \delta (x)\, dx
+\int_{\partial\Omega} |(F)^*|^2\, d\sigma \right\}^{1/2},
\end{equation}
where $F=\big( \frac{\partial v}{\partial x_i}\cdot u^\alpha \cdot b_{ji}^{\beta\alpha}\big)$.
Using $(F)^* \le C\,  \| u\|_{L^\infty(\Omega)} (\nabla v)^*$, we obtain
\begin{equation}\label{5.0-9}
\int_{\partial\Omega} |(F)^*|^2\, d\sigma
\le C \, \| u\|^2_{L^\infty(\Omega)} \int_{\partial \Omega} |(\nabla v)^*|^2\, d\sigma
\le C \, \| u\|^2_{L^\infty(\Omega)} \| g\|^2_{H^1(\partial\Omega)},
\end{equation}
where we have used (\ref{5.0-0}) for the second inequality. Since
$$
|\nabla F|\le C \big\{ |\nabla^2 v|\, |u| +|\nabla v|\, |\nabla u| +|\nabla v|\, |u|\, |\nabla B|\big\},
$$
we see that
\begin{equation}\label{5.0-11}
\aligned
\int_{\Omega} |\nabla F|^2\, \delta (x)\, dx
& \le C \|u\|^2_{L^\infty(\Omega)}\int_\Omega |\nabla^2 v|^2\, \delta (x)\, dx
+C \int_\Omega |\nabla v|^2 |\nabla u|^2 \, \delta (x) \, dx\\
& \qquad\qquad+C \| u\|^2_{L^\infty(\Omega)}
\int_\Omega |\nabla v|^2 |\nabla B|^2\, \delta (x)\, dx\\
&\le C\| u\|^2_{L^\infty(\Omega)}
\left\{ \int_\Omega |\nabla^2 v|^2\, \delta (x)\, dx
+\int_{\partial\Omega} |(\nabla v)^*|^2\, d\sigma \right\}\\
&\le C \| u\|^2_{L^\infty(\Omega)}
\|g\|^2_{H^1(\partial\Omega)},
\endaligned
\end{equation}
where we have used the fact that $|\nabla u(x)|^2\delta (x)\, dx$ is a Carleson  measure on $\Omega$
with norm less than $C\|u\|^2_{L^\infty(\Omega)}$, and that
$|\nabla B(x)|^2\, \delta (x)\, dx$ is also a Carleson measure on $\Omega$. This, together with (\ref{5.0-7}) and
(\ref{5.0-9}),
gives
\begin{equation}\label{5.0-13}
|I_1| \le C \| g\|_{H^1(\partial\Omega)} \| u\|_{L^\infty(\Omega)} \| h\|_{L^2(\partial\Omega)}.
\end{equation}

To bound $I_2$, we use
$$
|I_2|
\le C \|u\|_{L^\infty(\Omega)}
\int_\Omega |\nabla v|\, |A-B|\, |\nabla h|\, dx.
$$
It follows by the Cauchy inequality that
$$
\aligned
|I_2| &\le C \| u\|_{L^\infty(\Omega)}
\left\{ \int_\Omega |\nabla v|^2 |A-B|^2 \big[\delta (x)\big]^{-1}\, dx \right\}^{1/2}
\left\{ \int_\Omega |\nabla h|^2\, \delta (x)\, dx\right\}^{1/2}\\
&\le C \|u\|_{L^\infty(\Omega)}
\| (\nabla v)^*\|_{L^2(\partial\Omega)} \| h\|_{L^2(\partial\Omega)}\\
&\le C \|u\|_{L^\infty(\Omega)}
\| g\|_{H^1(\partial\Omega)} \| h\|_{L^2(\partial\Omega)},
\endaligned
$$
where we have used the fact that $ |A-B|^2 \big[\delta (x)\big]^{-1}\, dx$ is a Carleson measure
on $\Omega$. The estimate of $I=I_1+I_2$ is now complete. 

Next, we turn to the estimate of $J$ in (\ref{5.0-3}).
Write
\begin{equation}\label{5.0-15}
\aligned
J&=\int_\Omega \frac{\partial v}{\partial x_i}\cdot b_{ij}^{\alpha\beta} \frac{\partial u^\beta}{\partial x_j}
\cdot h^\alpha\, dx
+
\int_\Omega \frac{\partial v}{\partial x_i}\cdot \big( a_{ij}^{\alpha\beta}-
b_{ij}^{\alpha\beta}\big) \frac{\partial u^\beta}{\partial x_j}
\cdot h^\alpha\, dx\\
&=J_1 +J_2.
\endaligned
\end{equation}
For $J_1$, we use the integration by parts to obtain
\begin{equation}\label{5.0-17}
\aligned
J_1& =-\int_\Omega \frac{\partial}{\partial x_j}\left\{ b_{ij}^{\alpha\beta} \frac{\partial v}{\partial x_i}\right\}
\cdot u^\beta\cdot h^\alpha\, dx
-\int_\Omega \frac{\partial v}{\partial x_i} \cdot b_{ij}^{\alpha\beta} u^\beta\cdot \frac{\partial h^\alpha}{\partial x_i}\, dx\\
& \qquad\qquad \qquad+\int_{\partial\Omega} n_j \frac{\partial v}{\partial x_i}
\cdot b_{ij}^{\alpha\beta} u^\beta \cdot h^\alpha\, d\sigma\\
& =J_{11}+J_{12} +J_{13}.
\endaligned
\end{equation}
Note that $J_{12}=-I_1$ and  by the Cauchy inequality,
$$
|J_{13}|\le C \| u\|_{L^\infty(\Omega)} \| \nabla v\|_{L^2(\partial \Omega)}
\| h\|_{L^2(\partial\Omega)}
\le C\, \| u\|_{L^\infty(\Omega)} \| g\|_{H^1(\partial\Omega)} \| h\|_{L^2(\partial\Omega)}.
$$
Also,
\begin{equation}\label{5.0-19}
|J_{11}|
\le \left|\int_\Omega b_{ij}^{\alpha\beta} \frac{\partial^2 v}{\partial x_i\partial x_j} \cdot u^\beta\cdot h^\alpha\, dx\right|
+\left|
\int_\Omega \frac{\partial b_{ij}^{\alpha\beta}}{\partial x_j} \cdot \frac{\partial v}{\partial x_i}\cdot u^\beta
\cdot h^\alpha\, dx \right|.
\end{equation}
Since $\nabla v$ is harmonic in $\Omega$, we may use Theorem \ref{theorem-3.1} to bound
 the first term in the right hand side of (\ref{5.0-19}) by
\begin{equation}\label{5.0-20}
C \| \nabla v\|_{L^2(\partial\Omega)}
\left\{ \int_\Omega |\nabla G|^2\, \delta (x)\, dx +\int_{\partial\Omega} |(G)^*|^2\, d\sigma \right\}^{1/2},
\end{equation}
where $G =\big( b_{ij}^{\alpha\beta} u^\beta h^\alpha\big)$.
As in the case of $I_1$, the term in (\ref{5.0-20}) is dominated by the right hand side of (\ref{5.0-13}),
using the fact that $|\nabla B(x)|^2\, \delta(x)\, dx $ and $|\nabla u(x)|^2\, \delta(x)\, dx$
are Carleson measures on $\Omega$.
By the Cauchy inequality the second term in the right hand side of (\ref{5.0-19})
is bounded by
$$
\aligned
& C \| u\|_{L^\infty(\Omega)}
\left\{ \int_\Omega |\nabla v|^2 |\nabla B|^2 \big[\delta (x)\big]^{1-\eta}\, dx \right\}^{1/2}
\left\{\int_\Omega |h|^2\, \big[\delta(x)\big]^{\eta-1}\, dx\right\}^{1/2} \\
& \le C \|u\|_{L^\infty(\Omega)} \| g\|_{H^1(\partial\Omega)} \| h\|_{L^2(\partial\Omega)},
\endaligned
$$ 
where we have used the estimate $|\nabla B(x)|\le C \big[\delta (x)\big]^{\eta-1}$
as well as the fact that $\big[\delta(x)\big]^{\eta-1}$ is a Carleson measure on $\Omega$.

Finally, it remains to estimate
$$
J_2=
\int_\Omega \frac{\partial v}{\partial x_i}\cdot \big( a_{ij}^{\alpha\beta}-
b_{ij}^{\alpha\beta}\big) \frac{\partial u^\beta}{\partial x_j}
\cdot h^\alpha\, dx.
$$
Recall that $|A(x)-B(x)|\le C \big[\delta(x)\big]^\eta$.
By the Cauchy inequality we see that
$$
\aligned
J_2 & \le
\left\{ \int_\Omega |\nabla v|^2 |A-B|^2 |\nabla u|^2 \big[\delta(x)\big]^{1-2\eta}\, dx \right\}^{1/2}
\left\{\int_\Omega | h|^2 \big[\delta (x)\big]^{2\eta -1}\, dx \right\}^{1/2}\\
&\le C \| h\|_{L^2(\partial\Omega)} \left\{ \int_\Omega |\nabla v|^2 |\nabla u|^2 \delta (x)\, dx \right\}^{1/2}\\
&\le C \| h\|_{L^2(\Omega)} \| u\|_{L^\infty(\Omega)} \| g\|_{H^1(\partial\Omega)},
\endaligned
$$
where we have used the fact that $|\nabla u(x)|^2\, \delta (x)\, dx$ is
a Carleson measure with norm less than $C \|u\|_{L^\infty(\Omega)}$.
This completes the proof of Theorem \ref{main-theorem-2}.

\bibliography{S32.bbl}

\bigskip

\small
\noindent\textsc{Department of Mathematics, 
University of Kentucky, Lexington, KY 40506}\\
\emph{E-mail}: \texttt{zshen2@uky.edu} \\

% \noindent \today

\end{document}